\def\PC{\mathcal{P}}
\def\MC{\mathcal{M}}
\def\E{\mathbf{E}}
\def\R{\mathbf{R}}
\def\1{\mathbf{1}}
\def\al{\alpha}
\def\be{\beta}
\def\pa{\partial}
\def\ep{\epsilon}
\def\de{\delta}
\newtheorem{theorem}{Theorem}[section]
\newtheorem{remark}{Remark}
\newcommand{\la}{\lambda}
\newcommand{\si}{\sigma}
\newcommand{\Om}{\Omega}
\newcommand{\La}{\Lambda}
\begin{document}
\title{On the mean field games with common noise and the McKean-Vlasov SPDEs
\thanks{preprint}
}
\author{
Vassili N. Kolokoltsov\thanks{Department of Statistics, University of Warwick,
 Coventry CV4 7AL UK, and associate member  of IPI RAN RF, Email: v.kolokoltsov@warwick.ac.uk}
 and Marianna Troeva\thanks{Research Institute of Mathematics, North-Eastern Federal University,
58 Belinskogo str., Yakutsk 677000 Russia, Email: troeva@mail.ru
}}
\maketitle

\begin{abstract}
We formulate the MFG limit for $N$ interacting agents with a common noise as a single quasi-linear
deterministic infinite-dimensional partial differential second order backward equation.
We prove that any its (regular enough) solution provides an $1/N$-Nash-equilibrium profile
for the initial $N$-player game.
We use the method of stochastic characteristics to provide the link with the basic models
of MFG with a major player.
We develop two auxiliary theories of independent interest: sensitivity and regularity analysis
for the McKean-Vlasov SPDEs and the $1/N$-convergence rate for the propagation of chaos property of interacting diffusions.
\end{abstract}

{\bf Mathematics Subject Classification (2010)}: {60H15, 60J60, 91A06, 91A15, 49L20, 82C22}
\smallskip\par\noindent
{\bf Key words}: mean-field games, common noise, McKean-Vlasov SPDE, sensitivity, interacting particles

\section{Introduction}

We shall denote by $\MC^{sign}(\R^n)$ the space of signed Borel measures on $\R^n$ of finite total variation,
by $\MC(\R^n)$ its cone of positive measures, by $\MC_{\la}^{sign}(\R^n)$, $\MC_{\la}(\R^n)$
their subsets of total variation norm not exceeding $\la$,
by $\PC(\R^n)$ the set of probability measures.
We shall use the standard notation $(\phi, \mu)=\int \phi (x) \mu (dx)$ for the pairing of functions and measures.
By $\E$ we denote the expectation.

Let us consider $N$ agents, whose positions are governed by the system of SDEs
\begin{equation}
\label{eqstartSDEN}
dX_t^i=b(t,X_t^i, \mu_t^N, u_t^i)\, dt +\si_{ind}(X_t^i) dB_t^i +
\si_{com}(X_t^i) dW_t,
\end{equation}
where all $X_t^i$ belong to $\R$, $W_t, B_t^1, \cdots, B_t^N$ are independent one-dimensional standard Brownian motions, $W_t$,
referred to as the common noise,
and all $B^j_t$, referred to as the idiosyncratic or individual noises,
the subscripts 'com' and 'ind' referred to the objects related to the common or to the individual noises.
The parameters $u_t^i\in U\subset \R^m$ are controls available to the players,
trying to minimize
their payoffs
\begin{equation}
\label{eqstartPayN}
V^i_{[t,T]}(x)=\E \left[ \int_t^T J(s,X_s^i, \mu_s^N, u_s^i) \, ds +V_T(X_T^i, \mu_T^N)\right],
\end{equation}
depending on the action of other players,
with the given functions $J$ and $V_T$.
The coefficient $b(t,x, \mu, u)$ is a function of $t\in \R, x\in \R, u\in U$ and
a measure $\mu \in \MC^{sign}(\R)$, and $\mu_t^N$ in
\eqref{eqstartSDEN} is
\[
\mu_t^N=\frac{1}{N}\sum_{i=1}^N \de_{X_t^i}.
\]

In general, the function $b$ needs to be defined only for $\mu$ from the set of probability measures
$\PC(R^n)$. However, to use smoothness with respect to $\mu$ it is convenient (though not necessary)
 to have this function defined on a larger space. In the usual examples, $b$
depend on $\mu$ via a finite set of moments of type
\begin{equation}
\label{eqdemomonmu}
F_j(\mu)= \int \tilde F_j(x_1, \cdots, x_k) \mu(dx_1) \cdots \mu(dx_k),
\end{equation}
with some bounded measurable symmetric functions $\tilde F_j$.

For simplicity, we shall assume $b$ to be linear in $u$, that is,
\begin{equation}
\label{eqdefdriftlinearincontrol}
b(t,x,\mu,u)= b_1(t,x,\mu)+b_2(t,x,\mu)u,
\end{equation}
though other weaker assumptions are possible.

\begin{remark}
1) For this paper we shall stick to a smooth dependence of $b$ on $\mu$.
However, more singular dependencies are also of interest, for instance, the dependence on $\mu$ via its
quantile, see \cite{CKL}. This case will be discussed in our subsequent publication based on the theory
of SDEs with coefficients depending on quantiles developed in \cite{Kol13}.
2) In an attempt to present our main result in the most clear way, we make several simplifying assumptions,
primarily that  all objects are one dimensional and that $\si_{ind}$ does not depend on $\mu$ and $u$,
which can be relaxed causing the increase of technicalities.
3) We consider the simplest common noise $W_t$. It would be natural to extend the theory
to the space-time white noise $W(dxdt)$ or even to a more general noise expressed in terms of
functional semimartingales $F(x,t)$ analyzed in \cite{K1997}.
\end{remark}

It is known (see e. g. \cite{KurXi99}) that, for fixed common functions
$u^i_t(X_t^i)=u_t(X_t^i)$, and under appropriate regularity assumptions on $b,\si_{ind},\si_{com}$
the system \eqref{eqstartSDEN} is well-posed and the corresponding empirical measures $\mu_t^N$
converge, as $N\to \infty$, to the unique solution $\mu_t$ of the nonlinear SPDE of the McKean-Vlasov type
\begin{equation}
\label{eqlimMacVlaSPDE}
d(\phi, \mu_t)=(L[t,\mu_t,u_t]\phi, \mu_t) \, dt +(\si_{com}(.) \nabla \phi, \mu_t) \, dW_t,
\end{equation}
which is written here in the weak form meaning that it should hold for all $\phi \in C^2(\R)$, and where
$\nabla $ is the derivative with respect to the space variable $x$
and
\begin{equation}
\label{eqlimMacVlaSPDE1}
L[t,\mu_t,u_t]\phi (x)= \frac12
(\si_{ind}^2+\si_{com}^2)(x)\frac{\pa^2 \phi}{\pa x^2}
+b(t,x, \mu_t, u_t(x))\frac{\pa \phi}{\pa x}.
\end{equation}

Let us mention directly that in our approach it is more convenient to work with the Stratonovich
differentials. Namely, by the usual rule $Y\circ dX=Y dX +\frac12 dY dX$, equation \eqref{eqlimMacVlaSPDE}
rewrites in the Stratonovich form as
\begin{equation}
\label{eqlimMacVlaSPDEst}
d(\phi, \mu_t)=(L_{St}[t,\mu_t,u_t]\phi, \mu_t) \, dt +(\si_{com}(.) \nabla \phi, \mu_t) \circ dW_t,
\end{equation}
with
\begin{equation}
\label{eqlimMacVlaSPDE1st}
L_{St}[t,\mu_t,u_t]\phi (x)= \frac12
\si_{ind}^2(x)\frac{\pa^2 \phi}{\pa x^2}
+[b(t,x, \mu_t, u_t(x))-\frac12 \si_{com} \si'_{com}(x)]\frac{\pa \phi}{\pa x}.
\end{equation}

With some abuse of notation,
we shall often identify measures with their densities (whenever they exist) with respect to Lebesgue measures,
thus writing the strong form of equation \eqref{eqlimMacVlaSPDE} as
\begin{equation}
\label{eqlimMacVlaSPDEstr}
d\mu_t=L'[t,\mu_t,u_t] \mu_t \, dt -\nabla (\si_{com}(.) \mu_t) \, dW_t,
\end{equation}
with
\begin{equation}
\label{eqlimMacVlaSPDE1str}
L'[t,\mu,u_t]\nu = \frac12
\frac{\pa^2}{\pa x^2} \left[(\si_{ind}^2+\si_{com}^2) \mu)\right]
-\frac{\pa }{\pa x} [b(t,x, \mu, u_t)\mu].
\end{equation}
This identification does not cause ambiguity, because
under non-degeneracy of $\si_{ind}^2+\si_{com}^2$ that we shall always assume,
any solution $\mu_t[\mu_0]$ to  \eqref{eqlimMacVlaSPDE} has a density with respect to Lebesgue measure
at any $t>0$, even if $\mu_0$ does not.

Recall now that the optimal control problem facing each player, say $X^1_t$,
is to minimize cost \eqref{eqstartPayN}.
Now the crucial difference with the games without common noise starts to reveal itself. For games without noise, one expects
to get a deterministic curve $\mu_t$ in the limit of large $N$, so that in the limit, each player
should solve a usual optimization problem for a diffusion in $\R$. Here the limit is stochastic, and thus even in the limit
the optimization problem faced by each player is an optimization with respect to an infinite-dimensional,
in fact measure-valued, process.

In fact, for fixed $N$, if all players, apart from the first one, are using the same control $u_{com}(t,x,\mu)$,
 the optimal payoff for the first player is found from the HJB equation for the diffusion
governed by \eqref{eqstartSDEN}, that is, the HJB equation (where we denote $X^1$ by $x$),
\[
\frac{\pa V}{\pa t}(x, \mu)+\inf_{u} \left[b(t,x, \mu_t, u) \frac{\pa V}{\pa x}+J(t,x, \mu_t, u)\right]
+\frac12
(\si^2_{ind}+\si^2_{com})(x)\frac{\pa^2 V}{\pa x^2}
\]
\[
+\sum_{j\neq 1} b(t,x_j, \mu_t, u(t,x_j,\mu)) \frac{\pa V}{\pa x_j}+\frac12
(\si^2_{ind}+\si^2_{com})(x_j)\frac{\pa^2 V}{\pa x_j^2}
\]
\begin{equation}
\label{eqHJBcomnoise1}
+\sum_{j\neq 1} \si_{com}(x)\si_{com}(x_j)\frac{\pa^2 V}{\pa x_1 \pa x_j}
+\sum_{1<i<j}\si_{com}(x_i)\si_{com}(x_j)\frac{\pa^2 V}{\pa x_i \pa x_j}=0.
\end{equation}

As will be shown, in the limit when $(\de_{x_1}+\cdots +\de_{x_N})/N$ converge to the process $\mu_t$, this equation
turns to the limiting HJB equation
\[
\frac{\pa V}{\pa t}(x, \mu)+\inf_{u} \left[b(t,x, \mu, u) \frac{\pa V}{\pa x}+J(t,x, \mu, u)\right]
+\frac12
(\si^2_{ind}+\si^2_{com})(x)\frac{\pa^2 V}{\pa x^2}
\]
\begin{equation}
\label{eqHJBcomnoise2}
+\La_{lim} V(x,\mu) +\int \si_{com}(x)\si_{com}(y)\frac{\pa^2}{\pa x \pa y}\frac{\de V (x,\mu)}{\de \mu(y)} \mu(dy)=0,
\end{equation}
where the operator $\La_{lim}$ is calculated in \eqref{eqstartSDENgenonmes1} with $u_{com}$ as the control.

If $J$ is convex, the infimum here is achieved on the single point
\begin{equation}
\begin{split}
\label{eqsfixedmuHJBcont}
\hat u_{ind}(t,x,\mu)&=argmax_{u} \left[b(t,x, \mu_t, u) \frac{\pa V}{\pa x}+J(t,x, \mu_t, u)\right]\\
&=-\left(\frac{\pa J}{\pa u}\right)^{-1}\left(b_2(t,x,\mu_t)\frac{\pa V}{\pa x}\right).
\end{split}
\end{equation}

Now the difference with the usual MFG is fully seen. Instead of a pair of coupled forward-backward equations
we have now one single infinite-dimensional equation \eqref{eqHJBcomnoise2}.
Namely, for any curve $u_{com}(t,x,\mu)$ (defining $\La_{lim}$ in  \eqref{eqstartSDENgenonmes1} and thus in
\eqref{eqHJBcomnoise2}), we should solve equation \eqref{eqHJBcomnoise2} with a given terminal condition
leading to the optimal control \eqref{eqsfixedmuHJBcont}. The key {\it MFG consistency requirement} is
now given by the equation
\begin{equation}
\label{eqMFGconsistcomnoise}
\hat u_{ind}(t,x,\mu)=u_{com}(t,x,\mu).
\end{equation}

This can be interpreted as having a limiting game of two players, a tagged player
and a measure-valued player, for which we are looking for a symmetric Nash equilbrium.

Equivalently, the MFG consistency \eqref{eqMFGconsistcomnoise} can be encoded into a single quasi-linear
 deterministic infinite-dimensional partial differential second order backward equation on the function $V(t,x,\mu)$,
  which we present now in full
 substituting  $\La_{lim}$ from  \eqref{eqstartSDENgenonmes1} and \eqref{eqMFGconsistcomnoise} in
 \eqref{eqHJBcomnoise2}:
\[
\frac{\pa V}{\pa t}(t,x\mu)+\left.\left[b(t,x, \mu, u) \frac{\pa V}{\pa x}+J(t,x, \mu, u)\right]
\right|_{u(t,x,\mu)=-(\pa J/\pa u)^{-1}(b_2(t,x,\mu)\frac{\pa V}{\pa x})}
\]
\[
+\frac12
(\si^2_{ind}+\si^2_{com})(x)\frac{\pa^2 V}{\pa x^2}
+\frac12 \int_{\R^2}\!\!\si_{com}(y)\si_{com}(z)\frac{\pa^2}{\pa y \pa z}\frac{\de^2 V (t,x,\mu)}{\de \mu(y)\de \mu(z)} \mu(dy) \mu (dz)
\]
\[
+\int\left(\left[\left.b(t,.,\mu,u(t,.,\mu))\right|_{u(t,z,\mu)=-(\frac{\pa J}{\pa u})^{-1}(b_2(t,z,\mu)\frac{\pa V}{\pa z})}\nabla
+\frac12 (\si^2_{ind}+\si^2_{com})(.)\nabla^2\right]\right.
\]
\begin{equation}
\label{eqMFGconsistcomnoise1}
\left.\cdot\frac{\de V(t,x,\mu)}{\de \mu (.)}\right)(y) \mu (dy)+\int \si_{com}(x)\si_{com}(y)\frac{\pa^2}{\pa x \pa y}\frac{\de V (x,\mu)}{\de \mu(y)} \mu(dy)=0,
\end{equation}
with a given terminal condition
\begin{equation}
\label{eqsforbackboundary}
V_t(x,\mu)|_{t=T}=V_T(x,\mu), \quad \mu_t|_{t=0}=\mu_0.
\end{equation}

If
\[
J(t,x,\mu,u)=\frac12 u^2, \quad b_2(t,x,\mu)=1,
\]
then \eqref{eqMFGconsistcomnoise1} simplifies to
\[
 \frac{\pa V}{\pa t}(t,x,\mu)+\left[b\left(t,x, \mu,  -\frac{\pa V}{\pa x}\right) \frac{\pa V}{\pa x}+\frac12 \left(\frac{\pa V}{\pa x}\right)^2\right]
 \]
 \[
+\frac12
(\si^2_{ind}+\si^2_{com})(x)\frac{\pa^2 V}{\pa x^2}
+\frac12 \int_{\R^2} \si_{com}(y)\si_{com}(z)\frac{\pa^2}{\pa y \pa z}\frac{\de^2 V (t,x,\mu)}{\de \mu(y)\de \mu(z)} \mu(dy) \mu (dz)
\]
\[
+\int \left(\left[ b(t,.,\mu, -\nabla V(t,.,\mu))\nabla
 +\frac12 (\si^2_{ind}+\si^2_{com})(.)\nabla^2\right] \frac{\de V(t,x,\mu)}{\de \mu (.)}\right)(y) \mu (dy)
\]
 \begin{equation}
\label{eqMFGconsistcomnoise1sim}
 +\int \si_{com}(x)\si_{com}(y)\frac{\pa^2}{\pa x \pa y}\frac{\de V (x,\mu)}{\de \mu(y)} \mu(dy)=0.
\end{equation}

The MFG methodology suggests that
for large $N$ the optimal behavior of players arises from the control $\hat u$ given by \eqref{eqsfixedmuHJBcont}
with $V$ solving  \eqref{eqMFGconsistcomnoise1},
or equivalently, satisfying the consistency condition \eqref{eqMFGconsistcomnoise}.

To justify this claim one is confronted essentially with the 3 problems:

MFG1): Prove well-posedness of (or at least the existence of the solution to) the problem
\eqref{eqMFGconsistcomnoise1} or \eqref{eqMFGconsistcomnoise};

MFG2): Analyze the Nash equilibria of the $N$-player game given by
\eqref{eqstartSDEN}, \eqref{eqstartPayN} and prove that these equilibria
(or at least their subsequence) converge, as $N\to \infty$, to a solution
of the problem \eqref{eqMFGconsistcomnoise1} or \eqref{eqMFGconsistcomnoise};
assess the convergence rates;

MFG3): Show that a solution to the problem \eqref{eqMFGconsistcomnoise1} or \eqref{eqMFGconsistcomnoise}
provides a profile of symmetric strategies $\hat u_t(x)$, which is an $\ep$-Nash equilibrium
of the $N$-player game given by
\eqref{eqstartSDEN}, \eqref{eqstartPayN} and the initial distribution of players $\mu_0$,
with $\ep (N)\to 0$ as $N\to \infty$; estimate the error-term $\ep(N)$.

Questions MFG2), MFG3) are of course two facets of the same problem on how well
the solutions to the limiting problem
(\eqref{eqlimMacVlaSPDE},\eqref{eqHJBcomnoise2},\eqref{eqsfixedmuHJBcont},\eqref{eqsforbackboundary})
 approximate a finite player game, but the methods of dealing with these problems can be rather different.

To link with the usual MFG, let us notice that for the case without common noise given by
\eqref{eqstartSDEN} with $\si_{com}=0$, equation \eqref{eqMFGconsistcomnoise1sim}, say, turns to
\[
 \frac{\pa V}{\pa t}(t,x,\mu)+\left[b\left(t,x, \mu,  -\frac{\pa V}{\pa x}\right) \frac{\pa V}{\pa x}+\frac12 \left(\frac{\pa V}{\pa x}\right)^2\right]
+\frac12
\si^2_{ind}(x)\frac{\pa^2 V}{\pa x^2}
\]
 \begin{equation}
\label{eqMFGconsistcomnoise1sim}
+\int \left(\left[ b(t,.,\mu, -\nabla V(t,.,\mu))\nabla
 +\frac12 \si^2_{ind}(.)\nabla^2\right] \frac{\de V(t,x,\mu)}{\de \mu (.)}\right)(y) \mu (dy)=0,
\end{equation}
giving a single-equation approach to usual MFG. In fact, solving this equation for a function $V(t,x,\mu)$ is equivalent
to solving first the deterministic (forward) equation \eqref{eqlimMacVlaSPDEstr} with $\si_{com}=0$
and then the backward equation
\[
 \frac{\pa V}{\pa t}(t,x,\mu_t)+\left[b\left(t,x, \mu_t, -\frac{\pa V}{\pa x}\right) \frac{\pa V}{\pa x}+\frac12 \left(\frac{\pa V}{\pa x}\right)^2\right]
+\frac12
\si^2_{ind}(x)\frac{\pa^2 V}{\pa x^2}=0
\]
for a function $V(t,x)$.

In a more abstract form the link between the forward-backward formulation and the single backward formulation is as follows.
If $(x_t,\mu_t)$ is a controlled Markov process (not necessarily measure-valued),
optimal payoff is defined via the corresponding HJB on a function $V(t,x,\mu)$ of three arguments (corresponds to our general common noise case).
If the evolution of the coordinate $\mu_t$ is deterministic and does not depend on $x$ and its control,
one can (alternatively and equivalently) first solve this deterministic equation on $\mu$
(usual forward part of the basic MFG) and then substitute it in the basic HJB to get the
equation on $V(t,x)$, the function of two arguments only,
with $\mu_t$ included in the time dependence (usual backward part of the basic MFG).
This decomposition into forward-backward system is not available in general.

In this paper we are going to concentrate exclusively on question MFG3), aiming at proving the error-estimate
of order $\ep(N)\sim 1/N$. Our approach will be based on interpreting
(by means of Ito's formula) the common noise as a kind of binary interaction of agents
(in addition to the usual mean-field interaction of the standard situation without common noise)
and then reducing the problem to the sensitivity analysis for McKean-Vlasov SPDE.

The question MFG1) can be approached via the methods of papers
\cite{KoWe13}, \cite{KoWe14}, which will be addressed in another publication.
Some existence can be also derived from \cite{CarDelLac14}, which has however a slightly different formulation than the present one.

Our paper is organized as follows.
Next section provides a short literature review.
Section \ref{secstrat} formulates our main results and indicates the strategy of their proof.
Sections \ref{secreg}-\ref{secMar}
are devoted to the regularity and sensitivity analysis of the solutions to the
McKean-Vlasov SPDEs and the related properties of the corresponding measure-valued Markov processes.
The last three Sections prove the Theorems formulated in Section \ref{secstrat}.

\section{Brief literature review}

Mean-field games present a quickly developing area of the game theory.
It was initiated by Lasry-Lions \cite{LL2006} and Huang-Malhame-Caines
\cite{HCM3}, \cite{HCM07}, \cite{Hu10},
see \cite{Ba13}, \cite{Ben13}, \cite{GLL2010}, \cite{Gomsurv}, \cite{Cain14} for recent surveys,
as well as \cite{Car15}, \cite{Car13}, \cite{Gom14} and references therein.

New trends concern the theory of mean-field games with a major player,
see \cite{Nour13}, the numeric analysis, see \cite{Achd13},  and the games with a discrete state space,
see \cite{Gom14a} and references therein.

Even more recent development deals with
mean-field games with common noise, which are only starting to be analyzed.
Of course, common noise can be considered as a kind of neutral major player,
but the usual setting for the latter \cite{Nour13} introduces the corresponding noise
into the coefficients of the SDEs of the minor players, rather than adding additional
common stochastic differential. One of the ideas (and results) of our contribution is
to use the method of stochastic characteristics to link these two models.

Some simple concrete models of mean-field game types with common noise
applied to modeling inter bank loans are analyzed in detail
in \cite{Car15a}.
A model of common noise with constant coefficients is discussed in \cite{Ahuja}.
Seemingly first serious contributions to the general theory of mean-field games with common noise are the preprints
\cite{CarDelLac14} and \cite{Lack14}, which includes well-posedness for the mean-field limiting evolution
under certain assumptions.  However, \cite{CarDelLac14} and \cite{Lack14} work mostly
with controlled SDEs, and our approach is rather different, being based on McKean-Vlasov SPDEs.
The references on the literature on McKean-Vlasov equation are given in the Sections devoted
to this equations.

\section{Our strategy and results}
\label{secstrat}

Our main result is the following.

\begin{theorem}
\label{th1}
Let $b, \si_{com}, \si_{ind}$ satisfy the assumption of Theorem \ref{th2} below
 and let $V(t,x,\mu)$ be a solution to problem
\eqref{eqMFGconsistcomnoise1},\eqref{eqsforbackboundary}.
Assume $J(t,x,\mu,u(t,x,\mu))$ and $V(t,x,\mu)$, as functions of $(x,\mu)$ satisfy the assumptions on
function $F$ from Theorem \ref{th3} below.
Then the profile of symmetric strategies $\hat u_t(x, \mu)$ given by
\eqref{eqsfixedmuHJBcont} is an $\ep$-Nash equilibrium
of the $N$-player game given by
\eqref{eqstartSDEN}, \eqref{eqstartPayN}, with $\ep (N)\sim 1/N$ as $N\to \infty$.
\end{theorem}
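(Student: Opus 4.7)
The plan is to reduce Theorem \ref{th1} to two quantitative inputs already built up in the paper: the $1/N$-rate propagation of chaos (Theorem \ref{th3}), and the sensitivity/regularity theory for the McKean-Vlasov SPDE (Theorem \ref{th2}). I would split the argument into three steps: (a) the symmetric profile $\hat u$ has $N$-player payoff within $O(1/N)$ of $V(0,x,\mu_0)$; (b) a single unilateral deviator still sees $\mu_t^N\to\mu_t$ at rate $O(1/N)$, with the \emph{same} limiting SPDE solution determined by $\hat u$; (c) against this limiting $\mu_t$, no admissible control for a single agent beats $V(0,x,\mu_0)$.

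For step (a), all agents use $\hat u_t(x,\mu)$ and Theorem \ref{th3}, applied with the functional $F=V$ and to $F=J$ integrated along trajectories, gives
\[
\E V^1_{[0,T]}(X^1_0)=\E\!\left[\int_0^T J(t,X^1_t,\mu_t,\hat u_t(X^1_t,\mu_t))\,dt+V_T(X^1_T,\mu_T)\right]+O(1/N),
\]
where $\mu_t$ solves \eqref{eqlimMacVlaSPDEst} with the same $W_t$ and coefficient $\hat u$. Applying Ito's formula to $V(t,X^1_t,\mu_t)$ using the Stratonovich SPDE for $\mu_t$ and \eqref{eqstartSDEN} for $X^1_t$, the drift, second-order in $x$, functional-derivative-in-$\mu$, and cross $x$-$\mu$ terms regroup exactly into the left-hand side of \eqref{eqMFGconsistcomnoise1} evaluated at $u=\hat u$, so the integrand collapses to $-J$. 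Integrating on $[0,T]$ and taking expectations gives $V(0,X^1_0,\mu_0)+O(1/N)$.

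For step (b), if player $1$ switches to an arbitrary adapted $u^1$ while all others keep $\hat u$, the drift of a single particle is perturbed, contributing $O(1/N)$ to $\mu_t^N$ in the empirical sense. The sensitivity estimates of Sections \ref{secreg}--\ref{secMar}, feeding Theorem \ref{th3}, then still yield $\E|(F,\mu_t^N)-(F,\mu_t)|=O(1/N)$ against the \emph{same} $\mu_t$ (the limit is blind to a single deviation). Consequently
\[
V^1_{[0,T]}(X^1_0;u^1,\hat u^{-1})=\E\!\left[\int_0^T J(t,X^1_t,\mu_t,u^1_t)\,dt+V_T(X^1_T,\mu_T)\right]+O(1/N),
\]
where the inner expectation is now for the \emph{limiting} coupled diffusion-SPDE system with $\mu_t$ fixed by the symmetric profile. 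For step (c), condition on the path of $W_t$: the remaining problem is a one-dimensional stochastic control with $W$-adapted random coefficients, and its Bellman function is exactly $V(t,x,\mu_t)$. Indeed, Ito applied to $V(t,X^1_t,\mu_t)+\int_0^t J\,ds$ makes it a submartingale for any $u^1$ and a martingale for $u^1=\hat u$, by the infimum structure in \eqref{eqMFGconsistcomnoise1}. Hence the inner expectation is bounded below by $V(0,X^1_0,\mu_0)$ with equality at $\hat u$; combining with (a) and (b) gives
\[
V^1_{[0,T]}(X^1_0;u^1,\hat u^{-1})\ge V^1_{[0,T]}(X^1_0;\hat u,\hat u^{-1})-cN^{-1},
\]
which is the announced $\ep$-Nash property.

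The main obstacle will be step (b): one needs the $1/N$-rate propagation of chaos to be robust under a single non-symmetric perturbation of bounded size, uniformly over admissible $u^1$. This is precisely the content for which the sensitivity bounds of Theorem \ref{th2} must be established with constants that absorb a drift perturbation of order $1/N$ in one particle without blowing up the test-functional estimates in Theorem \ref{th3}. Everything else reduces to careful but essentially standard Ito calculus and the verification that the regularity hypotheses on $V$ and $J(\cdot,\cdot,\hat u)$ imposed in the statement match the hypotheses on $F$ in Theorem \ref{th3}.
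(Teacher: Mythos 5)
Your overall strategy---pass to the limiting tagged process via the $1/N$-rate of Theorem \ref{th3}, then exploit the optimality structure of $V$ in the limiting game---is the same as the paper's, and your steps (a) and (c) usefully make explicit a verification argument (It\^o's formula applied to $V(t,X^1_t,\mu_t)$, identifying $V$ as the Bellman function of the $W$-conditioned limiting control problem) that the paper leaves implicit when it simply asserts $V_{2,lim}\ge V$. However, your step (b) contains a genuine gap, which you yourself flag: you apply the $1/N$ propagation-of-chaos estimate to the game in which player $1$ uses an \emph{arbitrary adapted} control $u^1$, uniformly over all such $u^1$. Theorem \ref{th3} does not give this: its hypotheses require the tagged control $u^{ind}$ to be a feedback function $u_t(x,\mu)$ satisfying the regularity assumptions of Theorem \ref{th2}, and the constant $C(T)$ is obtained through sensitivity bounds that depend on that regularity. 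Nothing in Sections \ref{secreg}--\ref{secMar} establishes the estimate uniformly over all admissible deviations, and for a merely progressively measurable $u^1$ the tagged pair $(X^{1,N}_t,\mu^N_t)$ is not even a Markov process of the type covered by the propagator comparison \eqref{PropergatorProperty}.

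The paper's proof avoids this entirely by inserting the exact $N$-player best response as an intermediary: for any deviation $u_1$ one has trivially $V_1\ge V_2$, where $V_2$ is the optimal payoff of player $1$ in the $N$-player game against the crowd playing $u_{com}$ (the infimum over all controls, realized by the feedback $u_2$ from the HJB equation \eqref{eqHJBcomnoise1}); Theorem \ref{th3} is then applied \emph{only once}, to the single regular feedback $u_2$, giving $|V_2-V_{2,lim}|\le C/N$, after which $V_{2,lim}\ge V$ by optimality in the limiting game. This reordering is the key device: it converts ``uniformity over all deviations'' into a single application of the convergence theorem to one control. You should restructure your argument the same way---keep your steps (a) and (c) as the verification that $V$ is the limiting value attained by $\hat u$, but replace step (b) by the comparison with the $N$-player best response. (Both arguments still tacitly assume that this best response $u_2$ is regular enough to fall under Theorem \ref{th3}; that assumption is left implicit in the paper as well.)
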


\begin{remark} The assumptions can be weaken in many ways, but some regularity of the control
synthesis $u$ (like being Lipschitz in $x,\mu$) is definitely needed for the rather subtle
estimate $1/N$.
\end{remark}

Additionally,
in preparation to this result, we obtain two other results of independent interest,
not linked with any optimization problem, namely the regularity and sensitivity
for McKean-Vlasov SPDE, Theorems \ref{thwellposMcVlSPDE2} and \ref{thwellposMcVlSPDE3},
and the $1/N$-rates of convergence for interacting diffusions
to the limiting measure-valued diffusion, Theorem \ref{th2} (often interpreted as the 'propagation of chaos' property).
Notice that the convergence itself is a known result (see e.g. \cite{DaVa95} or \cite{KurXi99}).
The well-posedness of the McKean-Vlasov SPDE was shown in \cite{KurXi99} in the class
of $L_2$-functions, and for measures in \cite{DaVa95}, though under an additional monotonicity assumption.

Let us fix some basic notations for the function spaces.
For a topological space $X$, $C(X)$ denotes the Banach space of continuous functions equipped with the sup-norm $\|.\|$.
The topology on measures will be always the weak one.
If $X=\R^d$, then $C^k(X)$  denotes the Banach space of functions with all derivatives up to order $k$ belonging to $C(X)$,
 $L_1(X)$ denotes the space of integrable functions, $L_{\infty}(X)$ the space of bounded measurable functions
 with the essential supremum as a norm, $H^1_1(X)$ the Sobolev space of integrable functions such that its generalized
  derivative is also integrable. If $X$ is not indicated explicitly
   in this notations we mean $X=\R$.

Let $C^{k\times k}(\R^{2d})$ denote the space of functions $f$ on $\R^{2d}$ such that the partial derivatives
\[
\frac{\partial ^{\alpha+\beta} f}{\partial x^\alpha \partial y^\beta}
\quad \text{with multi-index } \alpha, \beta \text{ such that }|\alpha|\leq k, |\beta|\leq k,
\]
belong to $C(\R^{2d})$.

\begin{remark}
The space $C^{k\times k}(\R^{2d})$ looks a bit exotic. However, it is very natural
 for the study of the second order derivatives of nonlinear measure-valued flows.
The spaces of this kind also play an important role in the analysis of stochastic flows in H. Kunita \cite {K1997},
though Kunita's spaces are slightly more general as they allow for a linear growth of functions.
\end{remark}

Recall that for a functional $F(\mu)$ on $\MC^{sign}(\R^d)$, the variational derivative is defined as
\[
\frac{\de F}{\de \mu(x)}[\mu]=\frac{d}{dh}|_{h=0}F(\mu +h\de x).
\]
Derivatives of higher order are defined accordingly.
For instance, if $F=F_j$ is given by \eqref{eqdemomonmu}, then
\[
\frac{\de F}{\de \mu(x)}= k \int \tilde F_j(x, x_2, \cdots, x_k) \mu(dx_2) \cdots \mu(dx_k),
\]
\[
\frac{\de ^2 F}{\de \mu(x) \de \mu(y) }= k(k-1) \int \tilde F_j(x, y, x_3, \cdots, x_k) \mu(dx_3) \cdots \mu(dx_k).
\]


Let $C^k(\MC_{\la}^{sign}(\R^d))$ denote the space of functionals such that the $k$th order variational derivatives are well defined
and represent continuous functions. It is a Banach space with the norm
\[
\|F\|_{C^k(\MC_{\la}^{sign}(\R^d))}
=\sum_{j=0}^k \sup_{x_1, \cdots , x_j, \mu\in \MC_{\la}^{sign}(\R^d)} \left|\frac{\de^j F}{\de \mu(x_1)\cdots\de \mu(x_j)}\right|.
\]
Let $C^{k,l}(\MC_{\la}^{sign}(\R^d))$ denote the subspace of $C^k(\MC_{\la}^{sign}(\R^d))$ such that all derivatives up to order
$k$ have continuous bounded derivatives up to order $l$ as functions of their spatial variables. It is a Banach space with the norm
\[
\|F\|_{C^{k,l}(\MC_{\la}^{sign}(\R^d))}=\sum_{j=0}^k \sup_{\mu\in \MC_{\la}^{sign}(\R^d)}
\left\|\frac{\de^j F}{\de \mu(.)\cdots\de \mu(.)}[\mu]\right\|_{C^l(\R^{dj})}.
\]
Finally, let $C^{2,k\times k}(\MC_{\la}^{sign}(\R^d))$ be the space of functionals with the norm
\[
\|F\|_{C^{2,k\times k}(\MC_{\la}^{sign}(\R^d))}
=\sup_{\mu\in \MC_{\la}^{sign}(\R^d)}\left \|\frac{\de^2 F}{\de \mu (.) \de \mu(.)}\right\|_{C^{k \times k}(\R^{2d})}.
\]
These Banach spaces are natural objects for studying sensitivity for nonlinear measure-valued evolutions.
As we are interested mostly in probability measures, we shall usually tacitly assume $\la=1$ for these spaces.

As the derivatives of measures are not always measures (say, the derivative of $\de_x$ is $\de'_x$), to study the derivatives of
the nonlinear evolutions one needs the spaces dual to the spaces of smooth functions. Namely, for a generalized function
(distribution) $\xi$ on $\R^d$ we say that it belongs to the space $[C^k(\R^d)]'$ if the norm
\[
\|\xi\|_{[C^k(\R^d)]'}=\sup_{\phi: \|\phi\|_{C^k(\R^d)}\le 1} |(\xi, \phi)|
\]
is finite. For instance,
\[
\|\de^{(k)}_x \|_{[C^k(\R^d)]'}=1.
\]
We shall use these norms mostly for generalized functions that are given by locally integrable functions.
In this case the $[C(\R^d)]'$-norm coincides with the $L_1$ norm. To see why these spaces are handy,
 let us observe that if we take a spatial derivative of a heat kernel, then its
 $L_1$-norm is of order $t^{-1/2}$ for small $t$, but its $[C'(\R^d)]'$-norm is uniformly bounded.

Let us explain our strategy for proving Theorem \ref{th1}.

For any $N$ and a fixed common strategy $u_t(x,\mu)$,  solutions to the system of SDEs \eqref{eqstartSDEN}
on $t\in [0,T]$
define a backward propagator (also referred in the literature as a flow or as a two-parameter semigroup)
$U^{s,t}_N=U^{s,t}_N[u(.)]$, $0\le s\le t \le T$, of linear contractions on the space $C_{sym}(\R^N)$ of symmetric functions
via the formula
\begin{equation}
\label{eqbackpropNpart}
(U^{s,t}_Nf)(x_1, \cdots , x_N)=\E f (X_1, \cdots , X_N)_{s,t}(x_1, \cdots , x_N),
\end{equation}
where $(X_1, \cdots , X_N)_{s,t}(x_1, \cdots , x_N)$ is the solution to
\eqref{eqstartSDEN} at time $t$ with the initial condition
\[
(X_1, \cdots , X_N)_{s,s}(x_1, \cdots , x_N)=(x_1, \cdots , x_N)
\]
at time $s$. The corresponding dual forward propagator $V_N^{t,s} =(U_N^{s,t})'$
is defined by the equation
\begin{equation}
\label{eqforpropNpart}
(f, V_N^{t,s}\mu)=(U^{s,t}_N f, \mu).
\end{equation}
It acts
on the probability measures on $\R^N$, so that if $\mu$ is the initial distribution
of $(X_1, \cdots, X_N)$ at time $s$, then $V_N^{t,s}\mu$ is the distribution
of  $(X_1, \cdots, X_N)$ at time $t$.

By the standard inclusion
\begin{equation}
\label{eqstandinclusionpartmeasure}
(x_1, \cdots , x_N) \to \frac{1}{N} (\de_{x_1}+\cdots +\de_{X_N})
\end{equation}
the set $\R^N$ is mapped to the set $\PC_N(\R)$ of normalized sums of $N$ Dirac's measures,
so that $U^{s,t}_N$, $V^{t,s}_N$ can be considered as propagators in $C(\PC_N(\R))$ and
$\PC(\PC_N(\R))$ respectively.

On the other hand, for  a fixed function $u_t(x,\mu)$, the solution of SPDE \eqref{eqlimMacVlaSPDE}
specifies a stochastic process, a diffusion, on the space of probability measures $\PC(\R)$
defining the backward propagator $U^{s,t}=U^{s,t}[u(.)]$ on $C(\PC(\R))$:
\begin{equation}
\label{eqbackproplim}
(U^{s,t}f)(\mu)=\E f (\mu_{s,t}(\mu)),
\end{equation}
where $\mu_{s,t}(\mu)$ is the solution to \eqref{eqlimMacVlaSPDE} at time $t$ with a given initial condition
$\mu$ at time $s\le t$.

From the convergence of the empirical measures $\mu_t^N$, mentioned above, it follows
that   $U^{s,t}_N$ tend $U^{s,t}$, as $N\to \infty$. The following result provides the rates for the weak convergence.

\begin{theorem}
\label{th2}
Assume $\si_{ind}, \si_{com} \in C^3(\R)$ and are positive functions never approaching zero. Assume
\[
b(t,x,.,u(t,x,.))\in (C^{2,1\times 1}\cap C^{1,2})(\MC_1^{sign}(\R)),
\]
\[
b(t,.,\mu, u(t,.,\mu)) \in C^2(\R), \quad \frac{\pa b}{\pa x}(t,x,., u(t,x,.))\in  C^{1,0}(\MC_1^{sign}(\R))
\]
with bounds uniform with respect to all variables.
Then, for any $\mu \in \PC_N(\R)$ and $F\in (C^{2,1\times 1}\cap C^{1,2})(\MC_1^{sign}(\R))$
\begin{equation}
\label{eq1th2}
\|(U^{s,t}-U^{s,t}_N)F(\mu)\|_{C(\MC_1^{sign}(\R))}
\le \frac{C(T)}{N} \left(\|F\|_{C^{2,1\times 1}(\MC_1^{sign}(\R))}
+\|F\|_{C^{1,2}(\MC_1^{sign}(\R))}\right)
\end{equation}
for $0\le s \le t \le T$.
\end{theorem}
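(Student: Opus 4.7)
The plan is to combine the classical propagator-difference identity with a pointwise comparison of generators on empirical measures and with the sensitivity estimate for the limiting McKean-Vlasov SPDE. Let $\La$ denote the infinitesimal generator of the measure-valued diffusion defined by \eqref{eqlimMacVlaSPDEstr} acting on smooth functionals $F:\PC(\R)\to\R$, and let $L_N$ denote the generator of the $N$-particle system \eqref{eqstartSDEN}, viewed via the embedding \eqref{eqstandinclusionpartmeasure} as an operator on functionals of $\mu^N=\frac{1}{N}\sum_{i=1}^N\de_{x_i}$. Differentiating $\tau\mapsto U_N^{s,\tau}(U^{\tau,t}F)(\mu)$ yields the telescoping identity
\begin{equation*}
U^{s,t}F(\mu)-U_N^{s,t}F(\mu) \;=\; -\int_s^t U_N^{s,\tau}\bigl[L_N-\La\bigr]U^{\tau,t}F\,(\mu)\,d\tau,\qquad \mu\in\PC_N(\R),
\end{equation*}
and since $U_N^{s,\tau}$ is a contraction on $C(\PC_N(\R))$, the problem reduces to a pointwise bound of $|[L_N-\La]\Phi(\mu^N)|$ for $\Phi:=U^{\tau,t}F$.

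First I would carry out the local generator comparison. Using the chain rule for variational derivatives, $\pa_{x_i}\Phi(\mu^N)=N^{-1}\pa_y(\de\Phi/\de\mu(y))[\mu^N]|_{y=x_i}$ and $\pa_{x_i}\pa_{x_j}\Phi(\mu^N)=N^{-2}\pa_y\pa_z(\de^2\Phi/\de\mu(y)\de\mu(z))[\mu^N]|_{y=x_i,z=x_j}$ for $i\neq j$, with an extra diagonal $N^{-1}\pa_y^2(\de\Phi/\de\mu(y))|_{y=x_i}$ contribution when $i=j$. Inserting these into $L_N\Phi(\mu^N)$ and matching term by term with the single first-variation integral and the double common-noise integral that together compose $\La\Phi(\mu^N)$, all first-variation contributions cancel exactly, and the only residue stems from (a) the diagonal $i=j$ excluded from $\sum_{i\neq j}\si_{com}(x_i)\si_{com}(x_j)\pa_{x_i}\pa_{x_j}$ but present in the unrestricted double integral, and (b) the $N^{-2}$ self-correction produced by $\pa_{x_i}^2$. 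These conspire to
\begin{equation*}
[L_N-\La]\Phi(\mu^N) \;=\; \frac{1}{2N}\int \si_{ind}^2(y)\,\pa_y\pa_z\frac{\de^2\Phi}{\de\mu(y)\de\mu(z)}[\mu^N]\Big|_{y=z}\,\mu^N(dy),
\end{equation*}
whence $|[L_N-\La]\Phi(\mu^N)|\le (C/N)\|\Phi\|_{C^{2,1\times 1}(\MC_1^{sign}(\R))}$ with $C$ depending only on $\|\si_{ind}\|$. To close the telescoping I then invoke the sensitivity result Theorem \ref{thwellposMcVlSPDE3} to obtain, uniformly for $s\le\tau\le t\le T$,
\begin{equation*}
\|U^{\tau,t}F\|_{C^{2,1\times 1}(\MC_1^{sign}(\R))}\;\le\; C(T)\bigl(\|F\|_{C^{2,1\times 1}(\MC_1^{sign}(\R))}+\|F\|_{C^{1,2}(\MC_1^{sign}(\R))}\bigr),
\end{equation*}
the $C^{1,2}$ contribution entering because the equation for the second variation of the SPDE flow contains a forcing term built out of the first variation with two spatial derivatives, a coupling produced by the common-noise term in \eqref{eqlimMacVlaSPDEst}. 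Integrating over $\tau\in[s,t]$ delivers \eqref{eq1th2}.

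The real analytic work lies in the sensitivity input, i.e.\ in Theorem \ref{thwellposMcVlSPDE3}: differentiating the nonlinear SPDE flow twice in the measure direction while keeping track of joint spatial regularity is delicate, because the Stratonovich common-noise term couples the equations satisfied by the first and second variations, which is precisely why the data $F$ must be controlled in $C^{2,1\times 1}$ \emph{and} in $C^{1,2}$ rather than in $C^{2,1\times 1}$ alone. Once that theorem is in hand, the generator comparison and the telescoping assembly are essentially bookkeeping. A minor subtlety is that the initial measure $\mu\in\PC_N(\R)$ is atomic, but this causes no harm: the non-degeneracy of $\si_{ind}^2+\si_{com}^2$ assumed in the statement makes the SPDE instantaneously smoothing, so the sensitivity bounds on $U^{\tau,t}F$ remain valid on all of $\MC_1^{sign}(\R)$ including singular $\mu$.
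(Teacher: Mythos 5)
Your proposal is correct and follows essentially the same route as the paper: the generator of the $N$-particle system is rewritten on empirical measures via the variational-derivative chain rule, the $\si_{com}^2$ diagonal terms cancel leaving exactly the $\frac{1}{2N}\int\si_{ind}^2\,\pa_y\pa_z\frac{\de^2\Phi}{\de\mu(y)\de\mu(z)}\big|_{y=z}\mu(dy)$ correction (the paper's $\La_{corr}/N$), and the propagator-difference identity \eqref{PropergatorProperty} is closed using the invariance of $C^{1,2}\cap C^{2,1\times 1}$ under $U^{\tau,t}$. The only nominal discrepancy is that the needed invariance bound is the paper's Theorem \ref{thsenseSPDEMarkovsem} (which rests on the second-variation estimates of Theorem \ref{thsenseSPDE2}) rather than Theorem \ref{thwellposMcVlSPDE3} alone, but your description of what must be proved there matches the paper's content.
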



This result belongs to the statistical mechanics of interacting diffusions,
 so that its significance goes beyond any
links with games or control theory.

This result is not sufficient for us, as we have to allow one of the agent to behave differently
from the others. To tackle this case we shall considered the corresponding problem with a tagged agent.
Namely, consider the Markov process on pairs $(X_t^{1,N}, \mu_t^N) [u^{ind}(.), u^{com}(.)]$,
where $u^{ind}$ and $u^{com}$ are some $U$-valued functions $u_t^{ind}(x, \mu)$, $u_t^{com}(x,\mu)$,
$(X_t^{1,N}, \cdots , X_t^{N,N})$ solves \eqref{eqstartSDEN} under the assumptions that
the first agent uses the control $u_t^{ind}(X_t^{1,N}, \mu_t^N)$ and all other agents $i\neq 1 $ use the control
$u_t^{com}(X_t^{i,N}, \mu_t^N)$, and
$\mu_t^N=\frac{1}{N}\sum_{i=1}^N \de_{X_t^{i,N}}$.

\begin{remark}
The coordinates $(X_t^{1,N}, \mu_t^N)$ of our pair process are not independent.
Quite opposite, $X_t^{1,N}$ is the position of the first $\de$-function in $\mu_t^N$. However,
we are aiming at the limit $N\to \infty$ where the influence of $X_t^{1,N}$
on $\mu_t^N$ becomes negligible, and we do not want it to be lost in the limit.
Alternatively, to avoid this dependence, one can consider (as some authors do), instead of our $\mu_t^N$,
 the measures that do not take $X_t^{1,N}$ into account, that is $\tilde \mu_t^N=\frac{1}{N}\sum_{i=2}^N \de_{X_t^{i,N}}$,
 but this would neither change the results, nor simplifies the notations.
 \end{remark}

Let us now define the corresponding tagged propagators
$U^{s,t}_{N,tag}=U^{s,t}_{N,tag}[u^{ind}(.), u^{com}(.)]$ and $U^{s,t}_{tag}=U^{s,t}_{tag}[u^{ind}(.), u^{com}(.)]$:
\begin{equation}
\label{eqbackpropNparttag}
(U^{s,t}_{N,tag}F)(x, \mu)=\E F (X_t^{1,N}, \mu_t^N) [u^{ind}(.), u^{com}(.)](x,\mu),
\end{equation}
where $\mu=\frac{1}{N}\sum_{j=1}^N\de_{x_j}$ is the position of the process at time $s$ and where $x=x_1$;
\begin{equation}
\label{eqbackproplimtag}
(U^{s,t}_{tag}F)(x, \mu)=\E F (X_t^1, \mu_t) [u^{ind}(.), u^{com}(.)](x,\mu),
\end{equation}
where the process $(X_t^1, \mu_t)[u^{ind}(.), u^{com}(.)](x,\mu)$ with the initial data $x,\mu$ at time $s$
is the solution to the system of stochastic equations
\begin{equation}
\label{eqstartSDENtag}
dX_t^1=b(t,X_t^1, \mu_t, u_t^{ind}(X_t^1,\mu_t)) +\si_{ind}(X_t^1) dB_t^1 +
\si_{com}(X_t^1) dW_t,
\end{equation}
\begin{equation}
\label{eqlimMacVlaSPDEtag}
d(\phi, \mu_t)=(L[t,\mu_t,u_t^{com}(.,\mu_t)]\phi, \mu_t) \, dt +(\si_{com}(.) \nabla \phi, \mu_t) \, dW_t
\end{equation}
(the second equation is actually independent of the first one).

The following is the basic convergence result for the tagged processes.

\begin{theorem}
\label{th3}
Under the assumptions of Theorem \ref{th2} (with both $u_t^{com}, u_t^{ind}$ satisfying these assumptions),
let $F(x,\mu)$, $x\in \R$, belongs to the space
$(C^{2,1\times 1}\cap C^{1,2})(\MC_1^{sign}(\R))$ as a function of $\mu$, $F\in C^2(\R)$ as a function
of $x$ and $\frac{\pa F}{\pa x}(x,.)\in C^{1,1}(\MC_1^{sign}(\R))$.
Then, for any $\mu \in \PC_N(\R)$
\begin{equation}
\label{eq1th2}
\|(U_{tag}^{s,t}-U^{s,t}_{N,tag})F\|_{C(\R\times \MC_1^{sign}(\R))}\le \frac{C(T)}{N}.
\end{equation}
\end{theorem}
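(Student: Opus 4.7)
The plan is to extend the strategy of Theorem \ref{th2} to the tagged setting, treating the distinguished first particle as a singular perturbation of the fully symmetric system. The key heuristic is that since the tagged particle contributes weight $1/N$ to $\mu^N_t$, its use of the off-profile control $u^{ind}$ (rather than $u^{com}$) can affect the evolution of the empirical measure only at order $1/N$, and so the dynamics of the pair $(X^{1,N}_t, \mu^N_t)$ should approximate that of the limiting pair $(X^1_t, \mu_t)$ with the same rate as in the fully symmetric case.

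First I would compute the generator $\La_{N,tag}$ of the Markov process $(X^{1,N}_t, \mu^N_t)$ acting on smooth functionals $F(x,\mu)$ via the embedding \eqref{eqstandinclusionpartmeasure}. Expanding $F$ to second order in its variational derivatives (in the increments $\de_{x_j}/N$), one obtains a decomposition
\[
\La_{N,tag} F(x,\mu) = \La_{tag} F(x,\mu) + \frac{1}{N}\, R_N[F](x,\mu),
\]
where $\La_{tag}$ is the generator of the limiting pair from \eqref{eqstartSDENtag}--\eqref{eqlimMacVlaSPDEtag} and $R_N[F]$ is bounded by a fixed constant times the norms of $F$ appearing in the hypotheses. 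Two contributions feed the $1/N$: the off-profile term on the tagged particle (weighted $1/N$ inside $\mu^N_t$), and the diagonal binary terms in the common-noise cross-variation, quantified exactly as in the proof of Theorem \ref{th2}. Inserting this into the Duhamel identity
\[
(U^{s,t}_{tag} - U^{s,t}_{N,tag})F = \int_s^t U^{s,r}_{N,tag}\,(\La_{tag} - \La_{N,tag})\, U^{r,t}_{tag} F \, dr
\]
reduces the claim to a uniform bound $\sup_{r\in[s,t]} \frac{1}{N}\|R_N[U^{r,t}_{tag} F]\|_{C(\R\times\MC_1^{sign}(\R))} \le C(T)/N$.

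The main obstacle, carrying the bulk of the technical work, is showing that the limiting propagator $U^{r,t}_{tag}$ preserves the mixed regularity class prescribed on $F$, with constants uniform in $r\in[s,t]$. This splits into two interlocking parts: (i) smoothness of the tagged diffusion \eqref{eqstartSDENtag} in the initial datum $x$ and parametrically in $\mu$, obtained by standard parabolic theory using non-degeneracy of $\si_{ind}^2+\si_{com}^2$ together with the assumed regularity of $b$ and $u^{ind}$; and (ii) smoothness of the measure-valued flow $\mu\mapsto\mu_{r,t}(\mu)$ solving \eqref{eqlimMacVlaSPDEtag} up to second variational derivatives, measured in the $C^{1\times 1}$ and $C^{1,2}$ norms. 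Part (ii) is precisely the content of the sensitivity results Theorems \ref{thwellposMcVlSPDE2} and \ref{thwellposMcVlSPDE3}, whose use of the $C^{1\times 1}$ norm is essential to absorb the loss of one spatial derivative in the $\pa_y\pa_z$ factor of the common-noise term. The mixed hypothesis $\pa_x F(x,\cdot)\in C^{1,1}$ is exactly what is required to close the cross-derivative estimates produced by the coupling of $X^1_t$ with $\mu_t$ through the drift $b$. Once the uniform regularity of $U^{r,t}_{tag} F$ is secured, a Gronwall argument applied to the integral identity above delivers the announced $C(T)/N$ bound.
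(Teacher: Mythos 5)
Your proposal follows essentially the same route as the paper: compute the generator of the tagged pair $(X^{1,N}_t,\mu^N_t)$, split it as the limiting tagged generator $\La_{lim}+\tilde\La_{lim}$ plus an order-$1/N$ correction, insert this into the propagator-difference (Duhamel) identity, and close the estimate using the invariance of the smooth functional classes under the limiting propagator supplied by the sensitivity theorems. The only cosmetic difference is that no Gronwall iteration is needed at the last step --- the integrand is already uniformly of order $1/N$, so one simply integrates --- and your proposal is in fact somewhat more explicit than the paper, which reduces the argument to ``the same as for Theorem \ref{th2}''.
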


Theorem \ref{th2} is a particular case of Theorem \ref{th3} obtained from the latter by ignoring the first coordinate.
However, by methodological reason, we first prove simpler Theorem \ref{th2}, and then its extension Theorem \ref{th3}.

It is now seen how Theorem \ref{th1} should be deduced. Since the evolutions $U_{tag}^{s,t}$ and $U^{s,t}_{N,tag}$
are close to each other, the corresponding optimal policies of the tagged agent should also be close.
Details are given in Section \ref{secth1}.


\section{On the regularity of McKean-Vlasov SPDEs}
\label{secreg}

In this and the next sections we develop the sensitivity analysis for McKean-Vlasov SPDEs,
which, on the one hand side, represent an important ingredient in the proof of our main result on mean-field games,
but on the other hand, has an independent significance for the theory of SPDEs.
Notice that there is quite an extensive literature on the properties of equation \eqref{eqstochMcKeanVlonedim}
with $A=0$ (see e. g. \cite{HuNu13}, \cite{CoJoKh13} and references therein),
but for $A=0$ much less is known, so that even the regularity
results from Theorem \ref{thwellposMcVlSPDE2} below seem to be new.

For a function $v(t,x)$, $t\ge 0, x\in \R$, let us consider the stochastic equation
\begin{equation}
\label{eqstochMcKeanVlonedim}
dv = L_t(v) \, dt +\Om v \circ dW_t,
\end{equation}
where
$W_t$ is a one-dimensional Brownian motion,
\begin{equation}
\label{eqstochMcKeanVlonedim1}
\Om v(x) =A(x) \frac{\pa v}{\pa x} +B(x) v (x),
\end{equation}
\begin{equation}
\label{eqstochMcKeanVlonedim2}
L_t(v) =\frac12 \si^2(x)\frac{\pa^2 v}{\pa x^2}+ b(t,x,[v]) \frac{\pa v}{\pa x} + c(t,x,[v]) v,
\end{equation}
with some functions $A(x),B(x),\si(x)$ and the functions $b,c$ depending in a smooth way on the function
(or a measure) $v$. To visualize this dependence, one can think of $b,c$ depending on $v$
 via a finite set of moments of type
\begin{equation}
\label{eqdemomonmurep}
F_j[v]= \int \tilde F_j(x_1, \cdots, x_k) v(x_1) \cdots v(x_k) \, dx_1 \cdots dx_k,
\end{equation}
with some bounded symmetric measurable functions $\tilde F_j$.

In \eqref{eqstochMcKeanVlonedim},  $\circ$ denotes the Stratonovich differential.
From the usual rule $Y\circ dX=Y \, dX +\frac12 dY \, dX$, one can rewrite
\eqref{eqstochMcKeanVlonedim} as an equation with Ito's differential of the similar kind:
\begin{equation}
\label{eqstochMcKeanVlonedim3a}
dv = L_t(v) \, dt +\Om v \, dW_t +\frac12 \Om^2 v \, dt,
\end{equation}
or explicitly
\begin{equation}
\label{eqstochMcKeanVlonedim3}
dv = L_t(v) \, dt +\Om v \, dW_t +\frac12\left[A^2 \frac{\pa^2 v}{\pa x^2}+A(2B +A')\frac{\pa v}{\pa x} +(AB'+B^2)v
\right] dt.
\end{equation}

Our objective is to study the well-posedness of equation \eqref{eqstochMcKeanVlonedim} and more importantly
its sensitivity with respect to initial conditions.

Our main assumptions will be that
\begin{equation}
\label{eqass1}
0<\si_1 \le \si(x) \le \si_2, \quad 0<A_1 \le A(x) \le A_2,
\end{equation}
that $\si \in C^2(\R), B\in C^2(\R), A\in C^3(\R)$ so that

\begin{equation}
\label{eqass2}
\sup_x \max(|\si'(x)|, |\si''(x)|, |B(x)|, |B'(x)|, |B''(x)|, |A'(x)|, | A''(x)|,|A'''(x)|)\le B_1,
\end{equation}

and
\begin{equation}
\label{eqass3}
\max \left( \| b(t,.,[v])\|_{C^1(\R)}, \|c(t,.,[v])\|_{C(\R)}\right) \le  b_1,
\end{equation}
\begin{equation}
\label{eqass4}
\sup_{t,y}\!\!\sup_{\|v\|_{\MC(\R)}\le \la}\!\!\max\left(\left\|\frac{\de b(t,y,[v])}{\de v(.)}\right\|_{C^1(\R)}\!\!,
 \left\|\frac{\de c(t,y,[v])}{\de v(.)}\right\|_{C(\R)}\!\!,
\left\|\frac{\de}{\de v(.)}\frac{\pa  b(t,y,[v])}{\pa y}\right\|_{C(\R)}\right)\!\!\le C(\la),
\end{equation}
with some constants $\si_1, \si_2, A_1, A_2, B_1, b_1$ and a function $C(\la)$.


As above, we shall often omit the arguments of various functions. Moreover sometimes, we shall write $v(t,x)$
 as $v_t(x)$ when stressing that certain operator acts on $v$ as a function of $x$ for a given $t$.

Our basic approach will be the method of stochastic characteristics, see \cite{K1997}, \cite{KoTyu03}, though
in its simplest form, available for one-dimensional noise. This method allows one to
turn equation  \eqref{eqstochMcKeanVlonedim} into a non-stochastic equation of the second order, but with random
coefficients. Namely, for $A(x),B(x)\in C^1(\R)$, operator \eqref{eqstochMcKeanVlonedim1} generates a contraction
group $e^{t\Om}$ in $C(\R)$, so that $e^{t\Om}v_0(x)$ is the unique solution to the equation
\[
\frac{\pa v}{\pa t}=\Om v
\]
with the initial condition $v(0,x)=v_0(x)$. Explicitly,
\begin{equation}
\label{eqmethcharac}
e^{t\Om}v_0(x)=v_0(Y(t,x))G(t,x), \quad t \in \R,
\end{equation}
where $Y(t,x)$ is the unique solution to the ODE $\dot Y=-A(Y)$ with the initial condition $Y(0,x)=x$ and
\[
G(t,x)=\exp \{ \int_0^t B(Y(s,x)) \, ds \}.
\]
In particular, $G$ has the properties:
\[
G(-t,x)=G^{-1} (t, Y(-t,x))=\exp \{ \int_0^{-t} B(Y(s,x)) \, ds \}=\exp \{ -\int_{-t}^0 B(Y(s,x)) \, ds \},
\]
\[
\frac{1}{G} \frac{\pa G}{\pa x}(t,x)=\frac{\pa \ln G}{\pa x}(t,x)=\int_0^t B'(Y(s,x))\frac{\pa Y}{\pa x}(s,x) \, ds.
\]

Since the product-rule of calculus is valid for the Stratonovich differentials,
making the change of unknown function $v$ to $g=\exp \{-\Om W_t\}v$ rewrites \eqref{eqstochMcKeanVlonedim}
in terms of $g$ as
\begin{equation}
\label{eqstochMcKeanVlonedimtran}
\dot g_t =\tilde L_t[W](g_t)=\exp \{-\Om W_t\} L_t (\exp \{\Om W_t\} g_t),
\end{equation}
with $\dot v$ denoting the usual derivative of a function $v$ in time $t$. Of course one can obtain the same result using
usual Ito's formula and equation \eqref{eqstochMcKeanVlonedim3a}.
Since the operators $e^{t\Om}$ form a bounded semigroup in $L_1(\R)$,
as well as in $C^k(\R)$ and $C^k_{\infty}(\R)$ whenever $A,B\in C^k(\R)$,
equations \eqref{eqstochMcKeanVlonedimtran} and \eqref{eqstochMcKeanVlonedim}
are equivalent in the strongest possible sense.

To have a more concrete version of \eqref{eqstochMcKeanVlonedimtran} we calculate
\[
\frac{\pa }{\pa x} (\exp \{\Om W_t\} g_t)(x)
=\frac{\pa G}{\pa x}(W_t,x)g_t(Y(W_t,x))+G(W_t,x)\frac{\pa g_t}{\pa Y}(Y(W_t,x))\frac{\pa Y}{\pa x}(W_t,x),
\]
\[
\frac{\pa^2 }{\pa x^2} (\exp \{\Om W_t\} g_t)(x)
=\frac{\pa^2 G}{\pa x^2}(W_t,x)g_t(Y(W_t,x))+2\frac{\pa G}{\pa x}(W_t,x)\frac{\pa g_t}{\pa Y}(Y(W_t,x))\frac{\pa Y}{\pa x}(W_t,x)
\]
\[
+G(W_t,x)\frac{\pa g_t}{\pa Y}(Y(W_t,x))\frac{\pa ^2Y}{\pa x^2}(W_t,x)
+G(W_t,x)\frac{\pa^2 g_t}{\pa Y^2}(Y(W_t,x))\left(\frac{\pa Y}{\pa x}(W_t,x)\right)^2.
\]
Hence,
\begin{equation}
\label{eqstochMcKeanVlonedimtran1}
\tilde L_t[W](g_t)(x)
=\frac12 \tilde \si^2(x)\frac{\pa^2 g_t}{\pa x^2}+ \tilde b(t,x,[g_t]) \frac{\pa g_t}{\pa x} + \tilde c(t,x,[g_t]) g_t,
\end{equation}
with
\begin{equation}
\label{eqstochMcKeanVlonedimtran2}
\tilde \si^2 (x)=\si^2(Y(-W_t,x)) \left(\frac{\pa Y}{\pa z}(W_t,z)|_{z=Y(-W_t,x)}\right)^2,
\end{equation}
\[
\tilde b (t,x,[g])= \left(b (t,z,[\exp \{\Om W_t\} g])\frac{\pa Y}{\pa z}(W_t,z)\right)|_{z=Y(-W_t,x)}
\]
\begin{equation}
\label{eqstochMcKeanVlonedimtran3}
+\left[\frac12\si^2(z) \left(\frac{\pa^2 Y}{\pa z^2}(W_t,z)
+2\frac{\pa \ln G}{\pa z}(W_t,z)\frac{\pa Y}{\pa z}(W_t,z)\right)\right]|_{z=Y(-W_t,x)},
\end{equation}
\[
\tilde c (t,x,[g])= \left.\left(c (t,z,[\exp \{\Om W_t\} g])
+b (t,z,[\exp \{\Om W_t\} g])\frac{\pa \ln G}{\pa z}(W_t,z)\right)\right|_{z=Y(-W_t,x)}
\]
\begin{equation}
\label{eqstochMcKeanVlonedimtran4}
+\left.\left[\frac12\si^2(z) \left(\frac{1}{G}\frac{\pa ^2 G}{\pa z^2}\right)(W_t,z)\right] \right|_{z=Y(-W_t,x)}.
\end{equation}

The formulas above have straightforward extension to $x$ from arbitrary dimension.
The simplification arising from working in one-dimension is as follows:
\[
Y(t,x)=\Phi^{-1} (t+\Phi(x)),
\]
where
\[
\Phi (y)=\int_0^y \frac{dz}{A(z)}.
\]
Hence, under \eqref{eqass1}, \eqref{eqass2} it follows that
\[
\frac{A_1}{A_2}\le \frac{\pa Y}{\pa x}(t,x)\le \frac{A_2}{A_1}, \quad
\left|\frac{\pa ^2Y}{\pa x^2}(t,x)\right| \le \frac{B_1 A_2}{A_1^2}\left(1+ \frac{A_2^2}{A_1^2}\right)
\]
and
\[
\left|\frac{\pa \ln G}{\pa x}(t,x)\right|\le |t| B_1 \frac{A_2}{A_1}
\]
for all $t,x$
and hence
\begin{equation}
\label{eqstochMcKeanVlonedimtran5}
\si_1 \frac{A_1}{A_2} \le \tilde \si (x) \le \si_2 \frac{A_2}{A_1},
\end{equation}
\begin{equation}
\label{eqstochMcKeanVlonedimtran6}
\tilde b (t,x,[g]) \le C(T)(1+\bar W_T),\quad \tilde c (t,x,[g]) \le C(T)(1+\bar W_t),
\end{equation}
with some constants $C(T)$ and $\bar W_T=\max_{t\in [0,T]}|W_t|$.

Thus on any finite interval of time $[0,T]$ equation
\begin{equation}
\label{eqstochMcKeanVlonedimKunTran}
\dot g_t=\tilde L_t [W]g_t
\end{equation}
is the usual nonlinear McKean-Vlasov diffusion equation with uniformly elliptic second order part and
bounded coefficients. For this equation both well-posedness and smooth dependence on initial condition
is known, see e.g. \cite{Ko10} and \cite{KTY14}.
A new point for us is the necessity to have bounds for the expectations of the various relevant objects.
So we shall briefly recall the argument used for the analysis of the sensitivity of equation
\eqref{eqstochMcKeanVlonedimKunTran} paying attention to the latter issue.

Let us first make a precise statement about equation \eqref{eqstochMcKeanVlonedimKunTran}
independently on its link with our initial SPDE.
We shall need the following assumptions:
\begin{equation}
\label{eqassontran1}
\tilde \si_1 \le \tilde \si \le \tilde \si_2, \quad \tilde \si' \le \tilde \si_2,
\end{equation}
\begin{equation}
\label{eqassontran2}
\max\left(\|\tilde b(t,.,[v])\|_{C^1(\R)}, \|\tilde c(t,.,[v])\|_{C(\R)}\right) \le \tilde b(1+\bar W_T)
\end{equation}
and either
\begin{equation}
\label{eqassontran3}
\sup_{\|v\|_{L_1}\le \la}\!\!\max\left(\left\|\frac{\de \tilde b(t,y,[v])}{\de v(.)}\right\|_{C(\R)}\!\!, \left\|\frac{\de \tilde c(t,y,[v])}{\de v(.)}\right\|_{C(\R)}\!\!,
\left\|\frac{\de}{\de v(.)}\frac{\pa \tilde b(t,y,[v])}{\pa y}\right\|_{C(\R)}\right)\!\!\le \tilde C(\bar W_T, \la)
\end{equation}
or
\begin{equation}
\label{eqassontran4}
\sup_{\|v\|_{C(\R)}\le \la}\!\!\max\left(\left\|\frac{\de \tilde b(t,y,[v])}{\de v(.)}\right\|_{L_1(\R)}\!\!,
\left\|\frac{\de \tilde c(t,y,[v])}{\de v(.)}\right\|_{L_1(\R)}\!\!,
\left\|\frac{\de}{\de v(.)}\frac{\pa \tilde b(t,y,[v])}{\pa y}\right\|_{L_1(\R)}\right)\!\!\!\le \tilde C(\bar W_T,\la)
\end{equation}
for some constants $\tilde \si_{1,2}, \tilde b$ and a function $\tilde C$ depending on $\bar W_T=\sup_{t\in [0,T]}|W_t|$ and $\la$.

The idea is to rewrite the Cauchy problem for \eqref{eqstochMcKeanVlonedimKunTran}
with the initial condition $g_0$  in the mild form, that is as a fixed point equation
\begin{equation}
\label{eqstochMcKeanVlonedimKunTranmild1a}
\Phi [g]=g
\end{equation}
for the mapping
\[
\Phi g\mapsto \Phi_t [g](x)
=\int G(t,x,y) g_0 (y) dy
\]
\begin{equation}
\label{eqstochMcKeanVlonedimKunTranmild1}
+\int_0^t G(t-s,x,y) \left[\tilde b (s,y,[g_s])\frac{\pa g_s}{\pa y}(y)+\tilde c(s,y,[g_s])g_s(y)\right] \, ds,
\end{equation}
where $G(t,s,y)$ is the Green function for the Cauchy problem of the equation
\[
\dot g = \frac12 \tilde \si^2(x)\frac{\pa^2 g}{\pa x^2}.
\]
Using integration by parts $\Phi$ rewrites equivalently as
\[
 \Phi_t [g](x)
=\int G(t,x,y) g_0 (y) dy
\]
\begin{equation}
\label{eqstochMcKeanVlonedimKunTranmild2}
+\int_0^t ds \int \left[ G(t-s,x,y) \tilde c(s,y,[g_s])-\frac{\pa}{\pa y}(G(t-s,x,y)\tilde b (s,y,[g_s]))
\right] g_s(y) \, dy,
\end{equation}
from which it is seen that $\Phi$ maps bounded families of functions $g_t$, $t\in [0,T]$, with a given $g_0$, to itself, where
'bounded' can be understood either in sup-norm or in $L_1$-norm.

The Green function $G$ is random, i.e. it depends on $W$. However, by the standard theory
 of the second order equations (see \cite{Itobook} and \cite{PorEi}), assuming \eqref{eqassontran1},
 the function $G$ has the two-sided Gaussian bounds
\[
\frac{C_1}{\sqrt{2\pi t}} \exp \{ -\frac{(x-y)^2}{2tC_3}\}
\le G(t,x,y) \le \frac{C_2}{\sqrt{2\pi t}} \exp \{ -\frac{(x-y)^2}{2tC_4}\},
\]
and the bound for the derivatives
\[
\max\left(\left|\frac{\pa G(t,x,y)}{\pa x}\right|,\left|\frac{\pa G(t,x,y)}{\pa y}\right|\right)
\]
\[
\le C_5 \left|\frac{\pa}{\pa x} \frac{C_2}{\sqrt{2\pi t}} \exp \{ -\frac{(x-y)^2}{2tC_4}\}\right|
\le \frac{C_5 C_2}{C_4 \sqrt t}\frac{|x-y|}{\sqrt t \sqrt{2\pi t}}\exp \{ -\frac{(x-y)^2}{2tC_4}\},
\]
with constants $C_1-C_5$ independent of the noise.

This allows one to infer the estimates
\begin{equation}
\label{eqstochMcKeanVlonedimKunTranmild3}
\|\Phi_t[g^1]-\Phi_t[g_2]\|_{L_1}
\le
\int_0^t ds \frac{1}{\sqrt {t-s}}\|g^1_s-g^2_s]\|_{L_1} \tilde C(\bar W_T,\sup_{s\in [0,T]}\max (\|g^1_s\|_{L_1},\|g^2_s\|_{L_1}))
\end{equation}
in case of conditions \eqref{eqassontran1}, \eqref{eqassontran2}, \eqref{eqassontran3} or
 \begin{equation}
 \begin{split}
\label{eqstochMcKeanVlonedimKunTranmild4}
&\|\Phi_t[g^1]-\Phi_t[g_2]\|_{C(\R)}\\
\le
\int_0^t ds \frac{1}{\sqrt {t-s}}\|g^1_s-g^2_s]\|_{C(\R)} &\tilde C(\bar W_T,\sup_{s\in [0,T]}\max (\|g^1_s\|_{C(\R)},\|g^2_s\|_{C(\R)}))
\end{split}
\end{equation}
in case of conditions \eqref{eqassontran1}, \eqref{eqassontran2}, \eqref{eqassontran4}.

From these estimates one can infer the convergence of the iterates $\Phi^n (g_0)$ in either sup-norm or $L_1$-norm
and hence the existence of the unique solution $g_t \in L_1$ for any initial $g_0 \in L_1$ (and even for any initial
finite measure $g_0$) or of the unique solution $g_t \in C(\R)$ for any initial $g_0 \in C(\R)$ (and even for any initial
bounded measurable $g_0$), whenever one can prove the uniform boundedness of the norms of all iterations.

Let us see how one can get an estimate for the norm of the iterations.
From the definition of $\Phi$ and the estimates of the Green function $G$ we get
\[
\|\Phi_t^n\| \le C\|g_0\| + C\int_0^t (t-s)^{-1/2} \|\Phi_s^{n-1}\| ds,
\]
where $C=C(T)(1+\bar W_T)$ with $C(T)$ a non-random constant, and where
$\|\Phi_t^n\|$ is the norm of the $n$th iteration of $\Phi$ applied initially on $g_0$,
and the norm is either in $C(\R)$ or in $L_1$.
From this we deduce, by a straightforward induction, that
\begin{equation}
\label{eqstochMcKeanVlonedimKunTranmild3}
\|\Phi_t^n\|
\le C\|g_0\| (1+C\sqrt \pi I_{1/2}\1(t) +\cdots +(C\sqrt \pi)^{n-1} I_{(n-1)/2}\1(t)
+C^{n-1} \pi^{n/2} I_{n/2}\1(t)),
\end{equation}
were $I_{n/2}\1(t)$ is the application of the fractional integral of order $n/2$ to the constant function $\1$ (that equals one).
And consequently we get for the limiting norm of the fixed point the bound in terms of a Mittag-Leffler function
and hence eventually in terms of an exponent of $Ct$. Hence, since the expectation of $\exp \{ \bar W_T\}$ is finite,
we can deduce the bound for the expectation of the fixed point yielding the following result.

\begin{theorem}
\label{thwellposMcVlSPDE1}
(i) Under assumptions \eqref{eqassontran1}, \eqref{eqassontran2}, \eqref{eqassontran3} any $T>0$
for any $g_0\in \MC(\R)$ there exists a unique solution
$g_t$ of equation \eqref{eqstochMcKeanVlonedim} on $[0,T]$
such that $g_t\in L_1(\R)$ for all $t>0$, positive whenever $g_0$ is positive, and
\begin{equation}
\label{eq1thwellposMcVlSPDE1}
\|g_t\|_{L_1} \le C_1(T) \exp \{C_1(T) \bar W_T\} \|g_0\|_{\MC(\R)},  \quad \E \|g_t\|_{L_1} \le C_2(T) \|g_0\|_{\MC(\R)}
\end{equation}
with constants $C_{1,2}(T)$.

Moreover,
if $u_0 \in H_1^1$, then
 \begin{equation}
\label{eq2thwellposMcVlSPDE1}
\|g_t\|_{H^1_1} \le C_1(T) \exp \{C_1(T) \bar W_T\} \|g_0\|_{H^1_1},  \quad \E \|g_t\|_{H^1_1} \le C_2(T) \|g_0\|_{H^1_1}.
\end{equation}

Finally, for any $g_0\in \MC(\R)$,
$g_t \in H_1^1$ a.s. for all $t>0$ and, if the bounds on the r.h.s. of \eqref{eqassontran3} do not depend on $\bar W_T$,
 one has the estimate (uniform with respect to the noise)
\begin{equation}
\label{eq3thwellposMcVlSPDE1}
\|g_t\|_{H^1_1} \le C_3(T) \frac{1}{\sqrt t}\|g_0\|_{\MC(\R)}.
\end{equation}

(ii) Under assumptions \eqref{eqassontran1}, \eqref{eqassontran2}, \eqref{eqassontran4},
for any $g_0\in L_{\infty}(\R)$ there exists a unique solution $g_t$ of equation \eqref{eqstochMcKeanVlonedim}
on $[0,T]$ such that $g_t\in C(\R)$ for all $t>0$, and
\begin{equation}
\label{eq4thwellposMcVlSPDE1}
\|g_t\|_{C(\R)} \le C_1(T) \exp \{C_1(T) \bar W_T\} \|g_0\|_{L_{\infty}},  \quad \E \|g_t\|_{C(\R)} \le C_2(T) \|g_0\|_{L_{\infty}}
\end{equation}
with constants $C_{1,2}(T)$.

Moreover,
 if $g_0 \in C^1(\R)$, then
 \begin{equation}
\label{eq5thwellposMcVlSPDE1}
\|g_t\|_{C^1(\R)} \le C_1(T)\exp \{C_1(T) \bar W_T\} \|g_0\|_{C^1(\R)},
 \quad \E \|g_t\|_{C^1(\R)} \le C_2(T) \|g_0\|_{C^1(\R)}.
\end{equation}

Finally, for any $g_0\in L_{\infty}(\R)$,
$g_t \in C^1(\R)$ a.s. for all $t>0$ and, if the bounds on the r.h.s. of \eqref{eqassontran4} do not depend on $\bar W_T$,
 one has the estimate
\begin{equation}
\label{eq6thwellposMcVlSPDE1}
\|g_t\|_{C^1(\R)} \le C_3(T) \frac{1}{\sqrt t}\|g_0\|_{C(\R)}.
\end{equation}

\end{theorem}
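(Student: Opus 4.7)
My plan is to work pathwise on the transformed equation \eqref{eqstochMcKeanVlonedimKunTran}, establishing a quenched bound that depends on the realization of $W$ only through $\bar W_T$, and then to take expectation using the well-known sub-Gaussian tails of $\bar W_T=\sup_{t\in[0,T]}|W_t|$, for which $\E\exp\{c\bar W_T\}<\infty$ for every constant $c$. The equivalence between equations \eqref{eqstochMcKeanVlonedim} and \eqref{eqstochMcKeanVlonedimKunTran} via $v_t=\exp\{\Om W_t\}g_t$ and the uniform bounds \eqref{eqstochMcKeanVlonedimtran5}--\eqref{eqstochMcKeanVlonedimtran6} ensure that conclusions for $g$ transfer to $v$ up to a multiplicative factor of the form $C(T)\exp\{C(T)\bar W_T\}$, which is harmless after taking expectation.

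For the existence and the quenched $L_1$-bound in part (i), I would freeze a path of $W$ and set up the fixed-point iteration $g^{(n+1)}=\Phi[g^{(n)}]$, $g^{(0)}\equiv g_0$, with $\Phi$ in the integration-by-parts form \eqref{eqstochMcKeanVlonedimKunTranmild2}. The two-sided Gaussian bounds on the (random) Green function $G$ give $\|G(t,x,\cdot)\|_{L_1}\le C_2$ and $\|\partial_y G(t,x,\cdot)\|_{L_1}\le C(t-s)^{-1/2}$ uniformly in the noise, and assumptions \eqref{eqassontran2}--\eqref{eqassontran3} supply coefficient bounds of order $\tilde b(1+\bar W_T)$. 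Hence one obtains the iterative estimate
\[
\|\Phi_t^n\| \le C\|g_0\| + C\int_0^t (t-s)^{-1/2}\|\Phi_s^{n-1}\|\,ds, \qquad C=C(T)(1+\bar W_T),
\]
and \eqref{eqstochMcKeanVlonedimKunTranmild3} follows by induction. The resulting series is a Mittag-Leffler-type expression, summing to $C(T)\|g_0\|\exp\{C(T)(1+\bar W_T)^2 T\}$ times a constant. Uniqueness follows by applying the same Lipschitz-type estimate to the difference of two solutions and invoking the fractional (singular-kernel) Gronwall lemma. Positivity is inherited from the positivity of $G$ together with the pointwise positivity of each iterate once the drift and potential terms are handled by a standard cut-off.

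For the regularity estimates \eqref{eq2thwellposMcVlSPDE1}--\eqref{eq3thwellposMcVlSPDE1}, I would differentiate \eqref{eqstochMcKeanVlonedimKunTranmild1} in $x$. When $g_0\in H^1_1$ the derivative hits $g_0$ inside the kernel and one obtains an analogous fixed-point estimate with the same $(t-s)^{-1/2}$ singularity, giving \eqref{eq2thwellposMcVlSPDE1}. When $g_0$ is only a finite measure, the derivative must fall on $G$, and then $\|\partial_x G(t,x,\cdot)\|_{L_1}\le C/\sqrt{t}$ furnishes the extra $t^{-1/2}$ factor in \eqref{eq3thwellposMcVlSPDE1}; smoothing of the drift/potential terms is handled by a second iteration, which does not destroy this factor. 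Part (ii) is then completely parallel with $L_1$ replaced by $L_\infty$ and $\MC(\R)$ by $L_\infty(\R)$, the roles of $\|\cdot\|_{L_1}$ and $\|\cdot\|_{C(\R)}$ exchanged in the variational derivative bound \eqref{eqassontran4}.

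The main obstacle, in my view, is the combinatorial bookkeeping of the random prefactor $(1+\bar W_T)$ through the iteration: each application of $\Phi$ contributes a factor of $(1+\bar W_T)$ while each step of the induction \eqref{eqstochMcKeanVlonedimKunTranmild3} collects a fractional integral of $\mathbf{1}$. One must confirm that the resulting Mittag-Leffler series sums to something of the form $\exp\{\kappa(1+\bar W_T)^2\}$ with $\kappa$ non-random (coming from the absorbed powers of $C$ together with the Gamma-function denominators), since this is precisely the growth that is still integrable against the law of $\bar W_T$. Once this is verified, the expectation bounds in \eqref{eq1thwellposMcVlSPDE1}, \eqref{eq2thwellposMcVlSPDE1}, \eqref{eq4thwellposMcVlSPDE1}, \eqref{eq5thwellposMcVlSPDE1} follow immediately, and the noise-independent bounds \eqref{eq3thwellposMcVlSPDE1}, \eqref{eq6thwellposMcVlSPDE1} follow by the same argument with the $\bar W_T$-free version of \eqref{eqassontran3}--\eqref{eqassontran4}.
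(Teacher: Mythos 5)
Your proposal follows essentially the same route as the paper: pass to the transformed non-stochastic equation \eqref{eqstochMcKeanVlonedimKunTran}, set up the fixed-point iteration in the mild form \eqref{eqstochMcKeanVlonedimKunTranmild2}, use the noise-independent Gaussian bounds on the random Green function to get the recursion $\|\Phi_t^n\|\le C\|g_0\|+C\int_0^t(t-s)^{-1/2}\|\Phi_s^{n-1}\|\,ds$ with $C=C(T)(1+\bar W_T)$, sum the resulting Mittag--Leffler series, and only then take expectations over the noise; the $H^1_1$ bounds and the $t^{-1/2}$ smoothing estimate are obtained exactly as in the paper's proof by letting the spatial derivative fall on the kernel and running a singular Gronwall argument.

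The one point where you should be careful is the step you yourself single out as the main obstacle. The Mittag--Leffler function of order $1/2$ satisfies $E_{1/2}(z)\sim 2e^{z^2}$, so the quenched bound you obtain is of the form $\exp\{\kappa(1+\bar W_T)^2\}$ with $\kappa=\kappa(T)$ \emph{not} small in general; but $\E\exp\{\kappa\bar W_T^2\}$ is finite only when $\kappa<1/(2T)$, since $\bar W_T$ has Gaussian tails of variance $T$. So your assertion that the quadratic exponent ``is precisely the growth that is still integrable against the law of $\bar W_T$'' is false for generic $\kappa$, and the passage to the expectation bounds in \eqref{eq1thwellposMcVlSPDE1}, \eqref{eq2thwellposMcVlSPDE1}, \eqref{eq4thwellposMcVlSPDE1}, \eqref{eq5thwellposMcVlSPDE1} does not ``follow immediately'' from it. The paper's own statement records a bound that is linear in $\bar W_T$ in the exponent (which is what makes $\E\exp\{C_1(T)\bar W_T\}<\infty$ usable), and its proof sketch asserts ``an exponent of $Ct$'' without confronting the quadratic growth either; to close this step honestly you would need either a sharper tracking of where the factor $(1+\bar W_T)$ actually enters (it multiplies only the lower-order coefficients $\tilde b,\tilde c$, not the diffusion part, so a more refined iteration can keep the exponent linear in $\bar W_T$), or an argument exploiting the independence of the increments of $W$ over small time subintervals. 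As written, this is the one genuine gap in your proposal; the rest matches the paper's argument.
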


\begin{proof}
Let us talk about (i) only, as (ii) is fully analogous.
The proof of the first statement was already sketched above.
The estimates for the norm in $H_1^1$ are obtained from the iterations in a fully analogous way leading
to \eqref{eq2thwellposMcVlSPDE1}. Finally, we get from \eqref{eqstochMcKeanVlonedimKunTranmild1a} the estimate
\[
\|g_t\|_{H_1^1} \le \frac{1}{\sqrt t} \|g_0\|_{L_1} + \int_0^t C(\bar W_T) \frac{1}{\sqrt {t-s}} \|g_s\|_{H_1^1} ds,
\]
so that
\[
\sup_{s\in (0,t]} (\sqrt s \|g_s\|_{H_1^1}) \le \|g_0\|_{L_1}
+ \sqrt t  C(\bar W_T)\int_0^t \frac{1}{\sqrt {t-s}\sqrt s} \sup_{s\in (0,t]} (\sqrt s \|g_s\|_{H_1^1}) \, ds,
\]
and thus
\[
\sup_{s\in (0,t]} (\sqrt s \|g_s\|_{H_1^1}) \le \|g_0\|_{L_1}
+ \sqrt t  C(\bar W_T)\int_0^1 \frac{du}{\sqrt {1-u}\sqrt u} \sup_{s\in (0,t]} (\sqrt s \|g_s\|_{H_1^1}).
\]
If $C(\bar W_T)=C(T)$ actually does not depend on $\bar W_T$ we get for small enough $t$ that
\[
 \sup_{s\in (0,t]} (\sqrt s \|g_s\|_{H_1^1})
 \le \frac{ \|g_0\|_{L_1}}{1- t C(T)}
 \]
 with a constant $C$ implying \eqref{eq3thwellposMcVlSPDE1}.
 And in general we get a similar estimate a.s.

\end{proof}

Our basic objective is to study the sensitivity of the solution $g_t$ with respect to initial data, that is
\begin{equation}
\label{eqvarderdef}
\xi_t(.;x)[g_0]=\frac{\de g_t}{\de g_0(x)}=\frac{d}{dh}|_{h=0} g_t[g_0+h\de_x].
\end{equation}
This can be done in general by analyzing the convergence of the successive approximations to the solutions
\[
\xi^n_t(y;x)[g_0]=\frac{\de \Phi^n_t[g](y)}{\de g_0(x)},
\]
which satisfies the recursion
\[
\xi_t^n(y;x)=G(t,x,y)
\]
\[
+\int_0^t ds \int \left[G(t-s,x,y)\tilde c(s,y,[g_s])-\frac{\pa}{\pa y}(G(t-s,x,y)\tilde b (s,y,[g_s]))\right] \xi_s^{n-1}(y;x) \, dy\\
\]
\[
+\int_0^t ds\iint\frac{\de}{\de g_s(z)}\biggl[G(t-s,x,y) \tilde c(s,y,[g_s])
\]
\begin{equation}
\label{eqstochMcKeanVlonedimKunTransen1}
\left.-\frac{\pa}{\pa y}(G(t{-}s,x,y)\tilde b (s,y,[g_s]))\right] \xi_s^{n-1}(z;x) g_s(y) \, dy \, dz.
\end{equation}
Under the assumptions of Theorem \ref{thwellposMcVlSPDE1}, say (i), we get the recursive estimates for $\xi_n$
in the form
\[
\|\xi_t^n(.;x)\|_{\MC^{sign}(\R)} \le C + C(T, \bar W_t, \sup_{t\in [0,T]} \|g_t\|_{L_1})
\int_0^t (t-s)^{-1/2} \|\xi_s^{n-1}\|_{\MC^{sign}(\R)} ds,
\]
and by linearity the same estimates for the increments $\xi_t^{n+1}(.;x)-\xi_t^{n-1}(.;x)$ in terms of the increments
$\xi_t^n(.;x)-\xi_t^{n-1}(.;x)$ implying the convergence of the sequence $\xi^n$ and hence
the existence of the derivative \eqref{eqvarderdef} almost surely.


To apply Theorem \ref{thwellposMcVlSPDE1} to equation  \eqref{eqstochMcKeanVlonedim},
we have to calculate the variational derivatives of the type
$\de F(\exp \{ \Om W_t\}g)/\de g$ in terms of the derivatives of $F$. To this end, let us first find out, how
the transformation $e^{t\Om}$ acts on measures (rather than functions). For any functions $v\in L_1$, $\phi \in C(\R)$ we have
\[
(\phi, v)= \int \phi (x) e^{\Om t} v(x) \, dx = \int G(t,x) \phi (x) v(Y(t,x)) dx
\]
\[
=\int G(t,Y(-t,z)) \phi (Y(-t,z)) v(z) \frac{\pa Y(-t,z)}{\pa z} \, dz,
\]
from which the extension to measures $v$ is directly seen. Thus, for any measure $g$, we get
\[
(\phi, \frac{\de }{\de g(x)} e^{t\Om} g)=(\phi, e^{t\Om} \de_x)
=G(t,Y(-t,x)) \phi (Y(-t,x)) \frac{\pa Y(-t,x)}{\pa x},
\]
so that
\[
\frac{\de }{\de g(x)} e^{t\Om} g=G(t,Y(-t,x)) \frac{\pa Y(-t,x)}{\pa x} \de_{Y(-t,x)}.
\]
Consequently,
\[
\frac{\de}{\de g(x)} F ( \exp \{ \Om W_t\}g)=\int \left.\frac{\de F(\mu)}{\de \mu (z)}\right|_{\mu=\exp \{ \Om W_t\}g}
(e^{\Om W_t} \de_x)(z) \, dz
\]
\begin{equation}
\label{eqvarderKuTran}
= \left.\frac{\de F(\mu)}{\de \mu (z)}\right|_{\mu=\exp \{ \Om W_t\}g}^{z=Y(-W_t,x)} G(W_t,Y(-W_t,x)) \frac{\pa Y(-W_t,x)}{\pa x}.
\end{equation}

Using this formula, equation \eqref{eqstochMcKeanVlonedimtran2} - \eqref{eqstochMcKeanVlonedimtran4},
and the convergence of sequence \eqref{eqstochMcKeanVlonedimKunTransen1},
we obtain the following result as a consequence of Theorem \ref{thwellposMcVlSPDE1}.

\begin{theorem}
\label{thwellposMcVlSPDE2}
Assume \eqref{eqass1} -- \eqref{eqass4} hold and a $T>0$ given. Then

(i) For any $v_0\in \MC^{sign}(\R)$ there exists a unique solution
$v_t$ of equation \eqref{eqstochMcKeanVlonedim} on $[0,T]$
such that $v_t\in L_1(\R)$ for all $t>0$, positive whenever $v_0$ is positive, and
\begin{equation}
\label{eq1thwellposMcVlSPDE2}
\|v_t\|_{L_1} \le C_1(T) \exp \{C_1(T) \bar W_T\} \|v_0\|_{\MC(\R)},  \quad \E \|v_t\|_{L_1} \le C_2(T) \|v_0\|_{\MC(\R)}
\end{equation}
with constants $C_{1,2}(T)$.

(ii) If $v_0 \in H_1^1$, then
 \begin{equation}
\label{eq2thwellposMcVlSPDE2}
\|v_t\|_{H^1_1} \le C_1(T) \exp \{C_1(T) \bar W_T\} \|v_0\|_{H^1_1},  \quad \E \|g_t\|_{H^1_1} \le C_2(T) \|g_0\|_{H^1_1}.
\end{equation}

(iii) For any $v_0\in \MC(\R)$,
$v_t \in H_1^1$ a.s. for all $t>0$ and, if the bounds on the r.h.s. of \eqref{eqassontran3} do not depend on $\bar W_T$,
 one has the estimate
\begin{equation}
\label{eq3thwellposMcVlSPDE2}
\|v_t\|_{H^1_1} \le C_3(T) \frac{1}{\sqrt t}\|v_0\|_{\MC(\R)}.
\end{equation}

(iv) The variational derivative $\xi_t(.;x)[v_0]=\frac{\de v_t}{\de v_0(x)}$
of the solution $v_t$ with respect to initial data exists a.s. as a measure of finite total variation.
\end{theorem}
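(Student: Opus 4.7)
The plan is to deduce the theorem from the non-SPDE version, Theorem \ref{thwellposMcVlSPDE1}, via the stochastic characteristics transformation $g_t = e^{-\Om W_t} v_t$, which converts \eqref{eqstochMcKeanVlonedim} into the random-coefficient PDE \eqref{eqstochMcKeanVlonedimKunTran} with transformed data $\tilde \si, \tilde b, \tilde c$ given explicitly by \eqref{eqstochMcKeanVlonedimtran2}--\eqref{eqstochMcKeanVlonedimtran4}. First I would verify that the original assumptions \eqref{eqass1}--\eqref{eqass4} imply \eqref{eqassontran1}--\eqref{eqassontran3} for $\tilde \si, \tilde b, \tilde c$. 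This is nearly done already in the text: the explicit one-dimensional formula $Y(t,x) = \Phi^{-1}(t + \Phi(x))$ together with \eqref{eqass1}--\eqref{eqass2} yields the deterministic bounds for $\pa Y/\pa x$ and $\pa^2 Y/\pa x^2$ stated above \eqref{eqstochMcKeanVlonedimtran5}, while $\pa \ln G/\pa x$ grows at most linearly in $|W_t|$. Combined with the original bounds on $b, c$ and the transformation formula \eqref{eqvarderKuTran} for variational derivatives under $e^{\Om t}$, this gives \eqref{eqassontran1}--\eqref{eqassontran3} with a function $\tilde C$ depending polynomially on $\bar W_T$.

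Given this verification, parts (i)--(iii) follow by applying Theorem \ref{thwellposMcVlSPDE1}(i) to $g_t$ and then transporting the estimates back to $v_t = e^{\Om W_t} g_t$. Since $A \in C^3$ and $B \in C^2$ with bounded derivatives, the group $e^{t\Om}$ is bounded on $\MC(\R)$, $L_1(\R)$ and $H^1_1(\R)$ with operator norm at most $\exp(C|t|)$ for some $C$ depending only on the constants in \eqref{eqass2}; evaluated at $t = W_t$, $|t|\le \bar W_T$, this yields at most an extra factor $\exp(C \bar W_T)$ in all norms, which is absorbed into the constants $C_1(T)$ appearing in \eqref{eq1thwellposMcVlSPDE2}--\eqref{eq3thwellposMcVlSPDE2}. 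The expectation estimates then follow from the sub-Gaussian tail of $\bar W_T$, i.e.\ $\E \exp(C \bar W_T) < \infty$ for every $C$. Positivity preservation in (i) is immediate since $e^{\Om W_t}$ acts on measures as a positive pull-back through $Y(-W_t,\cdot)$ multiplied by the positive factor $G(W_t, Y(-W_t,\cdot))\,\pa Y(-W_t,\cdot)/\pa x$, and positivity of $g_t$ follows from the maximum principle for the linear parabolic equation \eqref{eqstochMcKeanVlonedimKunTran} (applied along the Picard iteration).

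For (iv), the existence of $\xi_t(\cdot;x)[v_0] = \de v_t/\de v_0(x)$ as a finite signed measure is deduced from the corresponding statement for $g_t$ via the chain rule \eqref{eqvarderKuTran}. The existence of $\de g_t/\de g_0(x)$ is obtained by passing to the limit in the iteration scheme \eqref{eqstochMcKeanVlonedimKunTransen1}: using the Gaussian upper bounds on $G$ and $\pa G/\pa y$ recorded after \eqref{eqstochMcKeanVlonedimKunTranmild2}, together with assumption \eqref{eqass4} transformed via \eqref{eqvarderKuTran}, one obtains the Cauchy-type recursion
\[
\|\xi^{n+1}_t(\cdot;x) - \xi^n_t(\cdot;x)\|_{\MC^{sign}(\R)} \le C(T,\bar W_T, \sup_{s\le T}\|g_s\|_{L_1}) \int_0^t (t-s)^{-1/2} \|\xi^n_s(\cdot;x) - \xi^{n-1}_s(\cdot;x)\|_{\MC^{sign}(\R)}\, ds,
\]
and the standard fractional-integral (Mittag--Leffler) bound then yields a.s.\ convergence of $\xi^n_t(\cdot;x)$ in total variation to a limit; passing to the limit in \eqref{eqstochMcKeanVlonedimKunTransen1} identifies this limit as $\de g_t/\de g_0(x)$, which transports to a finite signed measure $\de v_t/\de v_0(x)$ via \eqref{eqvarderKuTran} because $Y(-W_t,\cdot)$ is a diffeomorphism with bounded Jacobian.

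The main obstacle is the careful bookkeeping of the $\bar W_T$-dependence throughout. Every estimate for the transformed equation contains at least one factor of the form $\exp(C \bar W_T)$, and a priori these factors could compound under the Gronwall and fractional-integral iterations to something non-integrable; the key point is that, at every stage, the noise-dependent constants remain of the deterministic-in-$T$ form $\exp(C(T)\bar W_T)$, so that all the required expectation bounds reduce to the single elementary fact $\E \exp(C(T)\bar W_T) < \infty$.
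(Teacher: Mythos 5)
Your proposal follows exactly the route the paper takes: pass to $g_t=e^{-\Om W_t}v_t$, check that \eqref{eqass1}--\eqref{eqass4} yield \eqref{eqassontran1}--\eqref{eqassontran3} via the explicit bounds on $Y$ and $\ln G$, invoke Theorem \ref{thwellposMcVlSPDE1}, transport the estimates back through the bounded group $e^{\Om W_t}$, and obtain part (iv) from the convergence of the iteration \eqref{eqstochMcKeanVlonedimKunTransen1} together with \eqref{eqvarderKuTran}. The argument is correct and essentially coincides with the paper's (sketched) proof, with your write-up supplying somewhat more detail on the $\bar W_T$-bookkeeping and the positivity claim.
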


\section{Sensitivity for McKean-Vlasov SPDEs}
\label{secsen}

We shall discuss in more detail the sensitivity of McKean-Vlasov SPDE
\eqref{eqstochMcKeanVlonedim} reducing our attention to a more specific case of $L$ having the form of
a dual second order operator, namely to the equation

\begin{equation}
\label{eqstochMcKeanVlonedimdu}
dv = L'_{t,v}v \, dt -\nabla (A(x) v) \circ dW_t,
\end{equation}
where
\begin{equation}
\label{eqstochMcKeanVlonedimdu2}
L_{t,v}\phi =\frac12 \si^2(x)\frac{\pa^2 \phi}{\pa x^2}+ b(t,x,[v]) \frac{\pa \phi}{\pa x},
\end{equation}
and $L'_{t,v}$, its dual, defined as
\begin{equation}
\label{eqstochMcKeanVlonedimdu2}
L'_{t,v}u =\frac12 \frac{\pa^2 }{\pa x^2} (\si^2(x)u(x))-\frac{\pa }{\pa x} (b(t,x,[v])u(x)).
\end{equation}

Equation \eqref{eqstochMcKeanVlonedimdu} is a particular case of \eqref{eqstochMcKeanVlonedim}, so that
the theory of the previous section applies. Moreover, this equation naturally rewrites in the weak
form as
\begin{equation}
\label{eqstochMcKeanVlonedimduweak}
d(\phi,v) = (L_{t,v}\phi,v) \, dt + (\Om \phi, v) \circ dW_t,
\end{equation}
with $\Om \phi = A(x) \nabla \phi$.

Making in \eqref{eqstochMcKeanVlonedimdu} the change of function to $g=\exp \{-\Om' W_t\} v$,
where $\Om'=-\nabla \circ A(x)$ is the dual to $\Om$, leads to the equation
\begin{equation}
\label{eqstochMcKeanVlonedimdutran}
\dot g = \exp \{-\Om' W_t\} L'_{t,\exp \{\Om' W_t\}g} \exp \{\Om' W_t\}g \, dt,
\end{equation}
or in the weak form
\begin{equation}
\label{eqstochMcKeanVlonedimdutranweak}
\frac{d}{dt} (\phi,g) = (\tilde L_{t,\exp \{\Om' W_t\}g} \phi,g)=(\exp \{\Om W_t\} L_{t,\exp \{\Om' W_t\}g} \exp \{-\Om W_t\}\phi, g).
\end{equation}

Notice now that the operator $\Om =A(x) \nabla$ coincides with  \eqref{eqstochMcKeanVlonedim1} with
vanishing $B$, and hence the corresponding transformation $e^{t\Om}$ given by \eqref{eqmethcharac} has $G=1$
and hence the estimates \eqref{eqstochMcKeanVlonedimtran6} does not contain $\bar W_T$.
Moreover,
\begin{equation}
\label{eqdualchange}
e^{t\Om'}v(z)=v(Y(-t,z))\frac{\pa Y(-t,z)}{\pa z},
\end{equation}
so that the estimate \eqref{eqstochMcKeanVlonedimtran6} for the operator $\Om'$ also does not contain $\bar W_T$.
Consequently Theorem \ref{thwellposMcVlSPDE2} for equation \eqref{eqstochMcKeanVlonedimdu}
holds in its strongest form
containing estimate \eqref{eq3thwellposMcVlSPDE2}.
Moreover, general formulas \eqref{eqstochMcKeanVlonedimtran1}-\eqref{eqstochMcKeanVlonedimtran4}
simplify essentially for $B=0,c=0$ allowing us to rewrite $\tilde L$ from \eqref{eqstochMcKeanVlonedimdutranweak} as
\begin{equation}
\label{eqstochMcKeanVlonedimdutranweak1}
\tilde L_{t,\exp \{\Om' W_t\}g} \phi
=\frac12 \si^2(Y(W_t,x))\frac{\pa^2 \phi}{\pa x^2}+ \tilde b(t,x,[g]) \frac{\pa \phi}{\pa x},
\end{equation}
where $Y(t,x)$ solves the ODE $\dot Y=-A(Y)$ with the initial condition $Y(0,x)=x$ and
\begin{equation}
\label{eqstochMcKeanVlonedimdutranweak2}
\tilde b(t,x,[g]) =\left. \left(b(t,z, [\exp \{\Om' W_t\}g])\frac{\pa Y}{\pa z}(-W_t,z)
+\frac12 \si^2(z)\frac{\pa^2 Y}{\pa z^2}(-W_t,z) \right)\right|_{z=Y(W_t,x)}.
\end{equation}

Furthermore, as operator \eqref{eqstochMcKeanVlonedimdutranweak1} is the generator of a diffusion,
its solution cannot increase the sup-norm, and hence the solution to equation \eqref{eqstochMcKeanVlonedimdu}
does not increase the $L_1$-norm (or, equivalently, the $\MC(\R)$-norm).

To study sensitivity of equation \eqref{eqstochMcKeanVlonedimdu}, we can now apply the results of \cite{Ko10}
and \cite{KTY14} to equation \eqref{eqstochMcKeanVlonedimdutranweak}. However, these results
 yield the existence of the derivatives with respect to initial data
for almost all $W_t$, and we are interested here in the expectation of all bounds. Therefore,
we sketch briefly the approach of \cite{KTY14} to see how the estimates for the expectation arise.

Let us differentiate \eqref{eqstochMcKeanVlonedimdutranweak1} to get the equation for the derivatives
\[
\xi_t(.;x)[g_0]=\frac{\de g_t}{\de g_0(x)}=\frac{d}{dh}|_{h=0} g_t[g_0+h\de_x].
\]
the existence of these derivatives is already proved in Theorem \ref{thwellposMcVlSPDE2}.

Using \eqref{eqvarderKuTran} we get
\[
(\phi, \dot \xi_t(.;x)[g_0])=(\tilde L_{t,\exp \{\Om' W_t\}g} \phi,\xi_t(.;x)[g_0])
\]
\begin{equation}
\label{eqdualvarder0}
+\iint\!\!\left.\left(\frac{\de b(t,z, [\exp\{ \Om'W_t\}g])}{\de g (r)}\frac{\pa Y(-W_t,z)}{\pa z}\right)\right|_{z=Y(W_t,y)}\!\!
\xi_t(r;x)[g_0] \frac{\pa \phi}{\pa y} g_t(y) dy dr.
\end{equation}

Thus the evolution of $\xi_t$, considered as measures, is dual to the evolution on functions defined
in the inverse time via the equation
\[
\dot \phi_s = -\tilde L_{s,\exp \{\Om' W_t\}g} \phi_s
\]
\begin{equation}
\label{eqdualvarder1}
- \int\!\!\left.\left(\frac{\de b(t,z, [\exp\{ \Om'W_t\}g])}{\de g (.)}\frac{\pa Y(-W_t,z)}{\pa z}\right)\right|_{z=Y(W_t,y)}
\!\!\frac{\pa \phi_s}{\pa y} g_s(y) \, dy.
\end{equation}
This equation defines the backward propagator $U^{s,t}$, $s\le t$, on $C^1(\R)$, such that $U^{s,t}\phi$ is the solution
to equation  \eqref{eqdualvarder1} with the terminal condition $\phi$ at time $t$, and $U^{s,t}=(V^{t,s})'$, where
$V^{t,s}$ is the forward propagator yielding the solution to equation \eqref{eqdualvarder0}. To see that $U^{s,t}$
is well defined as claimed, let us write \eqref{eqdualvarder1} more explicitly. Namely, as follows from
\eqref{eqdualchange} and \eqref{eqvarderKuTran},
\begin{equation}
\label{eqvarderKuTran}
\frac{\de}{\de g(x)} F ( \exp \{ \Om' W_t\}g)
= \left.\frac{\de F(\mu)}{\de \mu (z)}\right|_{\mu=\exp \{ \Om' W_t\}g, z=Y(W_t,x)}.
\end{equation}
Consequently, \eqref{eqdualvarder1} rewrites as
\[
\dot \phi_s(p) = -\tilde L_{s,\exp \{\Om' W_t\}g} \phi_s(p)
\]
\begin{equation}
\label{eqdualvarder2}
-\int \left.\frac{\de b(t,z, \mu)}{\de \mu(r)}\right|_{\mu=\exp \{ \Om' W_t\}g_s}^{r=Y(W_t,p)}
\left.\frac{\pa Y(-W_t,z)}{\pa z}\right|_{z=Y(W_t,p)}
\frac{\pa \phi_s}{\pa y} g_s(y) \, dy.
\end{equation}

From the form of $\tilde L$ it is seen that it generates a Feller semigroup in $C(\R)$ with an invariant domain $C^1(\R)$,
and the second term  of \eqref{eqdualvarder2} is a bounded operator in $C^1(\R)$, due to the assumption
\eqref{eqass4} on the norm of $\de b(t,y,[v])/\de v(.)$ in $C^1(\R)$. Thus one can solve
\eqref{eqdualvarder2} by the standard perturbation theory showing that
\[
\|U_{s,t}\phi\|_{C^1(\R)} \le C(T) \|\phi\|_{C^1(\R)}
\]
with $C(T)$ depending only on the $\|g_0\|_{\MC(\R)}$ and not on the noise implying that
\[
\E \|U_{s,t}\phi\|_{C^1(\R)} \le C (T) \|\phi\|_{C^1(\R)}.
\]
Consequently the dual propagator $V^{t,s}$ defining the solution to $\xi$ in equation \eqref{eqdualvarder0}
is a bounded propagator in the dual space $(C^1(\R))'$, both a.s. and on average. Similarly,
assuming additional smoothness of coefficients we can claim that $U^{s,t}$ acts in $C^2(\R)$.
Finally, by moving the derivative of $\phi$ in the second term of \eqref{eqdualvarder2} to $g$ via the integration by parts
and using \eqref{eq3thwellposMcVlSPDE2} allows one to show that $U^{s,t}$ acts as a bounded semigroup in $C(\R)$.
Therefore, the estimates
\begin{equation}
\label{eq1thwellposMcVlSPDE3}
\|U_{s,t}\phi\|_{C^k(\R)} \le C(T) \|\phi\|_{C^k(\R)},
\quad \E \|U_{s,t}\phi\|_{C^k(\R)} \le C (T) \|\phi\|_{C^k(\R)}
\end{equation}
hold for $k=0,1,2$ with constants $C(T)$ depending only on the $\|v_0\|_{\MC(\R)}$.
The dual propagator $V^{t,s}$, solving equation \eqref{eqdualvarder0},
 is a bounded propagator in the dual spaces
$(C(\R))'$, $(C^1(\R))'$ and $(C^2(\R))'$ with bounds independent of the noise.
This implies that the dual propagator $V^{t,s}$, solving equation \eqref{eqdualvarder0},
 is a bounded propagator in the dual spaces
$(C(\R))'$, $(C^1(\R))'$ and $(C^2(\R))'$ with bounds independent of the noise.
Taking finally into account that
\[
\xi_0(.,x)=\de_x \in \MC(\R),
\quad \frac{\pa}{\pa x} \xi_0(.,x)=\de'_x \in (C^1(\R))',
\quad \frac{\pa^2}{\pa x^2} \xi_0(.,x) \in (C^2(\R))',
\]
leads us to the following result.

\begin{theorem}
\label{thwellposMcVlSPDE3}
Let $T>0$ and
\begin{equation}
\label{eqass1a}
0<\si_1 \le \si(x) \le \si_2, \quad 0<A_1 \le A(x) \le A_2,
\end{equation}
that $\si \in C^2(\R), A\in C^3(\R)$ so that
\begin{equation}
\label{eqass2a}
\sup_x \max(|\si'(x)|, |\si''(x)|, |A'(x)|, | A''(x)|,|A'''(x)|)\le B_1.
\end{equation}
Let
\begin{equation}
\label{eqass3a}
 \| b(t,.,[v])\|_{C^2(\R)} \le  b_1,
\end{equation}
\begin{equation}
\label{eqass4a}
\sup_{t,y}\sup_{\|v\|_{\MC(\R)}\le \la}
\max\left(\left\|\frac{\de b(t,y,[v])}{\de v(.)}\right\|_{C^2(\R)},
\left\|\frac{\de}{\de v(.)}\frac{\pa  b(t,y,[v])}{\pa y}\right\|_{C(\R)}\right) \le C(\la),
\end{equation}
with some constants $\si_1, \si_2, A_1, A_2, B_1, b_1$ and a function $C(\la)$. Then the following holds:

(i) For any $v_0\in \MC^{sign}(\R)$ there exists a unique solution
$v_t$ of equation \eqref{eqstochMcKeanVlonedim} on $[0,T]$
such that $v_t\in L_1(\R)$ for all $t>0$, positive whenever $v_0$ is positive, and
with the norm not exceeding $\|v_0\|_{\MC(\R)}$ for all realization of the noise $W$.
Moreover,
 $v_t \in H_1^1$ for all $t>0$ and the following estimates hold
 \begin{equation}
\label{eq2thwellposMcVlSPDE2}
\|v_t\|_{H^1_1} \le C(T) \|v_0\|_{H^1_1},  \quad \E \|v_t\|_{H^1_1} \le C(T) \|v_0\|_{H^1_1},
\end{equation}
\begin{equation}
\label{eq3thwellposMcVlSPDE3}
\|v_t\|_{H^1_1} \le C(T) \frac{1}{\sqrt t}\|v_0\|_{\MC(\R)}, \quad \E \|v_t\|_{H^1_1} \le C(T) \frac{1}{\sqrt t}\|v_0\|_{\MC(\R)}.
\end{equation}

(ii) The variational derivative $\xi_t(.;x)[v_0]=\frac{\de v_t}{\de v_0(x)}$
of the solution $v_t$ with respect to initial data
are well defined as elements of $L_1(\R)$ for any $x$ and $t>0$,
 and their first and second derivatives with respect to $x$ are bounded elements
of the dual spaces $(C^1(\R))'$ and $(C^2(\R))'$ respectively, so that
\begin{equation}
\label{eq1thwellposMcVlSPDE4}
\|\xi_0(.,x)\|_{L_1} \le C(T),
\quad \|\frac{\pa}{\pa x} \xi_0(.,x)\|_{(C^1(\R))'} \le C(T),
\quad \|\frac{\pa^2}{\pa x^2} \xi_0(.,x)\|_{(C^2(\R))'} \le C(T)
\end{equation}
with constants $C(T)$ depending only on the norm $\|v_0\|_{\MC(\R)}$ and independent of the noise.

\end{theorem}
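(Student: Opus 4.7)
The plan is to reduce everything to the deterministic (but random-coefficient) parabolic problem obtained from the method of stochastic characteristics, exactly as set up in the discussion preceding the theorem, and then read off the statements from Theorem~\ref{thwellposMcVlSPDE2} combined with a duality argument for the variational derivatives. The key observation that makes the bounds uniform in the noise is that for the equation \eqref{eqstochMcKeanVlonedimdu} the characteristic operator is $\Om=A(x)\nabla$ (i.e.\ $B=0$), so that the factor $G$ in \eqref{eqmethcharac} equals $1$ and the random coefficients in \eqref{eqstochMcKeanVlonedimdutranweak1}--\eqref{eqstochMcKeanVlonedimdutranweak2} carry no $\bar W_T$ prefactor. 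This is what unlocks the strong form of Theorem~\ref{thwellposMcVlSPDE2} including the smoothing estimate \eqref{eq3thwellposMcVlSPDE2}.

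For part (i), I would first put the equation in the form \eqref{eqstochMcKeanVlonedimdutranweak} via $g=\exp\{-\Om' W_t\}v$, and verify that the assumptions \eqref{eqass1a}--\eqref{eqass4a} imply \eqref{eqassontran1}--\eqref{eqassontran3} for the transformed operator with bounds independent of $\bar W_T$. Theorem~\ref{thwellposMcVlSPDE2} then yields existence, uniqueness, positivity preservation, the $H^1_1$ bounds \eqref{eq2thwellposMcVlSPDE2} and the smoothing estimate \eqref{eq3thwellposMcVlSPDE3}. The extra non-expansion property $\|v_t\|_{\MC(\R)}\le \|v_0\|_{\MC(\R)}$ a.s.\ is obtained by observing that the operator \eqref{eqstochMcKeanVlonedimdutranweak1} is the generator of a (non-homogeneous, random) diffusion, hence contractive on $C(\R)$; by duality it contracts on $\MC^{sign}(\R)$, and transferring this back to $v$ via $\exp\{\Om' W_t\}$ preserves total variation because $\Om'$ is a divergence.

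For part (ii), I would differentiate the weak form \eqref{eqstochMcKeanVlonedimduweak} in the initial datum to obtain the linearized equation \eqref{eqdualvarder0} for $\xi_t(\cdot;x)$, and recognize its adjoint as the backward evolution \eqref{eqdualvarder2} on test functions, whose propagator I call $U^{s,t}$. The core estimate is \eqref{eq1thwellposMcVlSPDE3}: boundedness of $U^{s,t}$ on $C^k(\R)$ for $k=0,1,2$ with non-random constants. For $k=1,2$ this comes from standard perturbation theory because the Feller generator $\tilde L$ preserves $C^k$ and the perturbation term in \eqref{eqdualvarder2} is bounded on $C^k$ by \eqref{eqass4a}. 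For $k=0$ one integrates by parts in the perturbation term and uses the smoothing bound \eqref{eq3thwellposMcVlSPDE3} on $g_s$. Dualizing, $V^{t,s}=(U^{s,t})'$ is bounded on $(C^k(\R))'$ for $k=0,1,2$, and applying it to the initial data $\xi_0(\cdot;x)=\de_x$, $\pa_x \xi_0(\cdot;x)=\de'_x$, $\pa^2_{xx}\xi_0(\cdot;x)=\de''_x$, which lie in $L_1\subset (C(\R))'$, in $(C^1(\R))'$ and in $(C^2(\R))'$ respectively with unit norm, yields exactly \eqref{eq1thwellposMcVlSPDE4}. Existence of $\xi_t$ as an $L_1$ function (rather than just a measure) at positive times follows from the smoothing of $V^{t,s}$ already established in Theorem~\ref{thwellposMcVlSPDE2}(iv).

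The main obstacle is the $k=0$ bound in \eqref{eq1thwellposMcVlSPDE3}: naively the perturbation in \eqref{eqdualvarder2} involves $\pa \phi/\pa y$, which is not controlled by $\|\phi\|_{C(\R)}$, so a direct perturbation argument fails in $C(\R)$. The fix is the integration by parts that shifts the $y$-derivative onto $g_s$, after which one needs the $H^1_1$-control of $g_s$ uniform in the noise; this is precisely what the noise-independent version of \eqref{eq3thwellposMcVlSPDE2} provides, but only because of the $B=0$ simplification noted above. Everything else (linearizing, dualizing, identifying the correct dual spaces for $\de_x^{(k)}$) is essentially bookkeeping.
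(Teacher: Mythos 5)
Your proposal follows essentially the same route as the paper: the reduction via stochastic characteristics with $\Om=A(x)\nabla$ (so $G\equiv 1$ and all bounds are noise-independent), the appeal to Theorem \ref{thwellposMcVlSPDE2} plus the diffusion-generator contraction argument for part (i), and for part (ii) the linearized equation \eqref{eqdualvarder0}, its adjoint backward propagator $U^{s,t}$ bounded on $C^k(\R)$ for $k=0,1,2$ (with the integration-by-parts plus $H^1_1$-smoothing trick for $k=0$), dualized and applied to $\de_x,\de'_x,\de''_x$. You also correctly identify the $k=0$ case as the delicate step and the $B=0$ simplification as what makes the constants noise-independent, which is exactly the paper's reasoning.
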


We are also interested in the second derivatives of the solutions $v_t$ with respect to initial data:
\begin{equation}
\label{eqdefdernachusl2}
\eta_t(.;x_1,x_2)= \frac{d}{dh}|_{h=0}\xi_t(.;x_1)[v_0+h\de_x]
=\frac{\pa^2}{\pa h_1 \pa h_2}|_{h_1=h_2=0}\mu_t[v_0+h_1\de_{x_1}+h_2\de_{x_2}].
\end{equation}

For this $\eta$ we get the following equation differentiating \eqref{eqdualvarder0}
with respect $g_0$;

\[
(\phi, \dot \eta_t(.;x_1,x_2))=(\tilde L_{t,\exp \{\Om' W_t\}g} \phi,\eta_t(.;x_1,x_2))
\]
\[
+\iint\!\!\left.\left(\frac{\de b(t,z, [\exp\{ \Om'W_t\}g])}{\de g (r)}\frac{\pa Y(-W_t,z)}{\pa z}\right)\!\right|_{z=Y(W_t,x)}
\eta_t(r;x_1,x_2) \frac{\pa \phi}{\pa y} g_t(y) dy dr
\]
\begin{equation}
\label{eqdualvarder2}
+\iint\!\!\!\left.\left(\frac{\de b(t,z, [\exp\{ \Om'W_t\}g])}{\de g (r)}\frac{\pa Y(-W_t,z)}{\pa z}\right)\!\right|_{z=Y(W_t,x)}
\frac{\pa \phi}{\pa y} \tilde{\xi}_t(r,y;x_1,x_2) dy dr
\end{equation}
\[
+\iint\!\!\!\left.\left(\frac{\de^2 b(t,z, [\exp\{ \Om'W_t\}g])}{\de g (r_1) \de g(r_2)}\frac{\pa Y(-W_t,z)}{\pa z}\right)\!\right|_{z=Y(W_t,x)}
\!\!\frac{\pa \phi}{\pa y} \xi_t(r_2;x_2)\xi_t(r_1;x_1) g_t(y) \, dy dr_1 dr_2,
\]
where $\tilde{\xi}_t(r,y;x_1,x_2)=[\xi_t(r;x_2)\xi_t(y;x_1)+\xi_t(y;x_2)\xi_t(r;x_1)]$.

The well-posedness of this equations and then the existence of the derivative \eqref{eqdefdernachusl2}
follows as above. However, we need also the existence and bounds for the derivatives of $\eta$ with respect to $x_1,x_2$.

\begin{theorem}
\label{thsenseSPDE2}
Under assumption of Theorem \ref{thwellposMcVlSPDE3} let
$b \in C^{2,1\times 1}(\MC_1^{sign}(\R))$ as a function of $v$ with all bounds uniform in other variables.
 Then for any $v_0\in \MC_1(\R)$
the derivative \eqref{eqdefdernachusl2} is well defined for all $t>0,x$ and the following bounds hold

\begin{equation}
\label{eq1thsenseSPDE2}
\|\frac{\pa^{\al} }{\pa x_1^{\al}}\frac{\pa^{\be} }{\pa x_2^{\be}}\eta_t(.;x_1,x_2)\|_{[C^2(\R)]'} \le C(T)
\end{equation}
with $\al, \be \le 1$ and
with some (random) constants $C$ depending on the time horizon $T$, but not on the noise $W$.
\end{theorem}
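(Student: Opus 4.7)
The plan is to mimic the strategy used in Theorem \ref{thwellposMcVlSPDE3} for the first variational derivative $\xi_t$, applied now to the inhomogeneous linear equation \eqref{eqdualvarder2} for $\eta_t(.;x_1,x_2)$. The first two terms on the right-hand side of \eqref{eqdualvarder2} have exactly the structure that defines the forward propagator $V^{t,s}$ (dual to a backward propagator $U^{s,t}$ bounded in $C^k(\R)$ for $k=0,1,2$ uniformly in the noise) already constructed in the proof of Theorem \ref{thwellposMcVlSPDE3}. Since $\eta_0(.;x_1,x_2)=0$, Duhamel's principle gives the mild representation
\[
\eta_t(.;x_1,x_2) = \int_0^t V^{t,s} F_s(.;x_1,x_2)\, ds,
\]
where $F_s = F_s^{(1)} + F_s^{(2)}$ collects the remaining third and fourth terms of \eqref{eqdualvarder2}. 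Here $F_s^{(1)}$ is linear in the symmetrised tensor $\tilde\xi_s(\cdot,\cdot;x_1,x_2)$, while $F_s^{(2)}$ involves the pure tensor $\xi_s(.;x_1)\otimes\xi_s(.;x_2)$ paired with the second variational kernel $\de^2 b/\de v(r_1)\de v(r_2)$ and integrated against $g_s$.

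Next I would differentiate $\eta_t$ in $x_1,x_2$ under the Duhamel integral. The propagator $V^{t,s}$ does not depend on $(x_1,x_2)$, so only $F_s$ is affected, and the differentiation falls solely on the $\xi_s(.;x_i)$ factors. Theorem \ref{thwellposMcVlSPDE3}(ii) provides uniform noise-independent bounds
\[
\|\xi_s(.;x)\|_{L_1}, \; \left\|\tfrac{\pa}{\pa x}\xi_s(.;x)\right\|_{[C^1(\R)]'} \le C(T).
\]
Combined with the $C^{2,1\times 1}$-regularity of $b$ in $v$ and the $H^1_1$-bound on $g_s$ from Theorem \ref{thwellposMcVlSPDE2}(ii), each term of $\pa_{x_1}^\al \pa_{x_2}^\be F_s$ with $\al,\be\le 1$, tested against any $\phi\in C^2(\R)$ after an integration by parts in $y$ that shifts one derivative onto the smooth factors, is bounded in the $[C^2(\R)]'$-norm by a constant $C(T)$ uniformly in $s\in[0,T]$. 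Boundedness of $V^{t,s}$ in $[C^2(\R)]'$, uniform in the noise, then yields \eqref{eq1thsenseSPDE2} from the Duhamel formula.

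The main technical point is the $F_s^{(2)}$-term when $\al=\be=1$, where one must pair the tensor product of distributions $\pa_{x_1}\xi_s(r_1;x_1)\otimes\pa_{x_2}\xi_s(r_2;x_2)$ against the kernel $\de^2 b/\de v(r_1)\de v(r_2)$. Thanks to the $C^{2,1\times 1}$-assumption, this kernel lies in $C^{1\times 1}(\R^2)$ uniformly in the remaining arguments, so the tensor-product pairing is well-defined and controlled by the product of the two $[C^1(\R)]'$-norms provided by Theorem \ref{thwellposMcVlSPDE3}(ii); the outer $y$-integration against $g_s\,\pa\phi/\pa y$ uses the $H^1_1$-bound on $g_s$. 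The $F_s^{(1)}$-term involves only single $\xi$-factors and is handled analogously with first-order dual norms sufficing. This gives the uniform-in-$s$ bound on the forcing that, after Duhamel, completes the proof of \eqref{eq1thsenseSPDE2}.
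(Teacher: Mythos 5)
Your proposal is correct and follows essentially the same route as the paper: the paper's proof likewise differentiates the equation for $\eta$ in $x_1,x_2$, observes that the product structure of the $\xi$-terms together with the $C^{2,1\times 1}$-assumption on $b$ makes all source terms uniformly bounded, and then invokes "the usual perturbation arguments," which is exactly the Duhamel representation with the noise-uniformly bounded propagator $V^{t,s}$ that you spell out. Your write-up simply makes explicit the details (zero initial condition for $\eta$, differentiation under the Duhamel integral, the pairing of the tensor $\pa_{x_1}\xi\otimes\pa_{x_2}\xi$ with the $C^{1\times 1}$ kernel of $\de^2 b/\de v\,\de v$) that the paper leaves as straightforward.
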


\begin{proof}
In the light of the properties of $\xi$
from Theorem \ref{thwellposMcVlSPDE3} it is straightforward to see, differentiating equation \eqref{eqdualvarder2} with respect to $x_1$ and $x_2$
that the assumptions made on $b$ is precisely the one needed to make all terms not containing $\eta$ uniformly bounded
(due to the product structure of $\xi$ entering the equation for $\eta$),
so that they can be written by the usual perturbation arguments.
\end{proof}

\section{On the domain of the Markov semigroups generated by the McKean-Vlasov SPDEs}
\label{secMar}

Since equation \eqref{eqstochMcKeanVlonedimdu} its solutions defines a Markov process, in fact a measure-valued diffusion,
the corresponding Markov propagator being given on the continuous functionals of measures in the usual way:
\begin{equation}
\label{eq1thsenseSPDEMarkovsem}
U^{s,t}F(v)= \E F(v_t(v,[W])),
\end{equation}
where $v_t$ is the solution to \eqref{eqstochMcKeanVlonedimdu} for $t>s$ with given $v=v_s$ at time $s$.

We use the same letter $U$ that was used for the propagators discussed in the proof if Theorem \ref{thwellposMcVlSPDE3}
which should not cause any confusion, as $U$ is used in the sense of \eqref{eq1thsenseSPDEMarkovsem} everywhere, except
in the intermediate discussion leading to  Theorem \ref{thwellposMcVlSPDE3}.

The main conclusion we need from the sensitivity analysis developed above is the invariance of the set of
smooth functionals under this propagator, that is the following fact:

\begin{theorem}
\label{thsenseSPDEMarkovsem}
Under assumption of Theorem \ref{thsenseSPDE2}
the spaces of functionals $C^{1,2} (\MC_{\la}^{sign} (\R))$ and its intersection
 with $C^{2, 1\times 1} (\MC_{\la}^{sign} (\R))$
are invariant under the action of the operators \eqref{eq1thsenseSPDEMarkovsem}, so that
\begin{equation}
\label{eq2thsenseSPDEMarkovsem}
\|U^{s,t} F\|_{C^{1,2}(\MC_{\la}^{sign}(\R))} \le C(T) \|F\|_{C^{1,2}(\MC_{\la}^{sign}(\R))},
\end{equation}
\begin{equation}
\label{eq3thsenseSPDEMarkovsem}
\|U^{s,t} F\|_{C^{2,1\times 1}(\MC_{\la}^{sign}(\R))}
\le C(T) \left(\|F\|_{C^{2,1\times 1}(\MC_{\la}^{sign}(\R))}+\|F\|_{C^{1,2}(\MC_{\la}^{sign}(\R))}\right)
\end{equation}
with a constant $C(T)$.
\end{theorem}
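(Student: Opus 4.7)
The plan is to differentiate $U^{s,t}F(v)=\E F(v_t(v,[W]))$ with respect to the initial data $v$ by a formal chain rule and then move every spatial derivative in $x$ off of $F$ and onto the sensitivity objects $\xi_t$ and $\eta_t$ analysed in Theorems \ref{thwellposMcVlSPDE3} and \ref{thsenseSPDE2}. The dual--space bounds already established for $\xi_t,\eta_t$ then pair cleanly with the hypothesised smoothness norms of $F$. The interchange of $\E$ and $\delta/\delta v(x)$ is justified by the almost-sure existence of the variational derivatives together with the $L^1$--in--$\omega$ bounds from Theorem \ref{thwellposMcVlSPDE3}(i) and Theorem \ref{thsenseSPDE2}, via dominated convergence.

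For the $C^{1,2}$ estimate \eqref{eq2thsenseSPDEMarkovsem}, the key identity is
\[
\frac{\de U^{s,t}F(v)}{\de v(x)}=\E\int \frac{\de F(v_t)}{\de v_t(y)}\,\xi_t(y;x)[v]\,dy,
\]
so that, for $k=0,1,2$,
\[
\frac{\pa^k}{\pa x^k}\frac{\de U^{s,t}F(v)}{\de v(x)}
=\E\left\langle \frac{\pa^k}{\pa x^k}\xi_t(\cdot;x),\ \frac{\de F(v_t)}{\de v_t(\cdot)}\right\rangle.
\]
Now I plug in the three estimates of \eqref{eq1thwellposMcVlSPDE4}: $\partial^k_x\xi_t(\cdot;x)$ is bounded, uniformly in $\omega$, in $L_1$, $(C^1)'$ and $(C^2)'$ respectively, while the pairing function $\de F/\de v_t(\cdot)$ lies in $C^2$ with norm bounded by $\|F\|_{C^{1,2}}$. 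This yields
\[
\Bigl\|\tfrac{\pa^k}{\pa x^k}\tfrac{\de U^{s,t}F}{\de v(x)}\Bigr\|_\infty\le C(T)\|F\|_{C^{1,2}}
\qquad(k=0,1,2),
\]
and together with the contraction bound $\|U^{s,t}F\|_\infty\le\|F\|_\infty$, this gives \eqref{eq2thsenseSPDEMarkovsem}.

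For the $C^{2,1\times 1}$ estimate \eqref{eq3thsenseSPDEMarkovsem}, differentiating once more produces a two-term decomposition
\begin{align*}
\frac{\de^2 U^{s,t}F(v)}{\de v(x_1)\de v(x_2)}
=&\ \E\iint \frac{\de^2 F(v_t)}{\de v_t(y_1)\de v_t(y_2)}\,\xi_t(y_1;x_1)\xi_t(y_2;x_2)\,dy_1dy_2\\
&+\E\int\frac{\de F(v_t)}{\de v_t(y)}\,\eta_t(y;x_1,x_2)\,dy.
\end{align*}
Applying $\partial_{x_1}\partial_{x_2}$ and moving the derivatives onto $\xi_t$ and $\eta_t$, the first term becomes the tensor pairing of $\partial_{x_1}\xi_t(\cdot;x_1)\otimes\partial_{x_2}\xi_t(\cdot;x_2)\in(C^1)'\otimes(C^1)'$ against $\de^2F/\de v_t(\cdot)\de v_t(\cdot)\in C^{1\times1}(\R^2)$; by Fubini this pairing is well defined (for each fixed $y_1$ the function $y_2\mapsto\de^2F/\de v_t(y_1)\de v_t(y_2)$ is $C^1$ with norm controlled by $\|F\|_{C^{2,1\times1}}$, and the resulting function of $y_1$ is again $C^1$ with the same control), giving the bound $C(T)\|F\|_{C^{2,1\times 1}}$. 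The second term is the pairing of $\partial_{x_1}\partial_{x_2}\eta_t(\cdot;x_1,x_2)\in[C^2(\R)]'$ (by \eqref{eq1thsenseSPDE2}) against $\de F/\de v_t(\cdot)\in C^2$, and is thus bounded by $C(T)\|F\|_{C^{1,2}}$. Combining these two contributions with the already-established $C^{1,2}$ bound yields \eqref{eq3thsenseSPDEMarkovsem}.

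The main delicate point, which I expect to be the principal obstacle, is the rigorous justification of the differentiation-under-the-expectation step and of the tensor-product pairing $(C^1)'\otimes(C^1)'$ vs.\ $C^{1\times 1}$; both require care because $\xi_t,\eta_t$ are only defined as distributions (not measures) once spatial derivatives in $x$ are taken. I plan to handle this by approximating $\de_x$ in the definition of $\xi_t$ by smooth bumps, working at the level of the linear backward propagator $U^{s,t}$ on $C^k(\R)$ introduced in the proof of Theorem \ref{thwellposMcVlSPDE3}—which has noise-independent bounds—and then passing to the limit using the estimates \eqref{eq1thwellposMcVlSPDE3}. All other computations are routine once this dual bookkeeping is in place.
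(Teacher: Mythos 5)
Your proposal is correct and follows essentially the same route as the paper: the same chain-rule identities for $\de U^{s,t}F/\de v(x)$ and $\de^2 U^{s,t}F/\de v(x_1)\de v(x_2)$ (the latter splitting into the $\eta_t$ term paired against $\de F/\de v_t(\cdot)\in C^2$ and the $\xi_t\otimes\xi_t$ term paired against $\de^2F/\de v_t\de v_t\in C^{1\times 1}$), followed by the dual-norm estimates of Theorems \ref{thwellposMcVlSPDE3} and \ref{thsenseSPDE2}. The paper states these pairings and bounds directly without dwelling on the interchange of $\E$ with the variational derivative or the tensor pairing, which you flag as the delicate points and propose to handle by smooth approximation of $\de_x$.
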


\begin{proof}
It follows from Theorem \ref{thwellposMcVlSPDE3} and the formula
\[
\frac{(\de U^{s,t}(F))(v) }{\de v(x)}=\E \frac{\de F (v_t)}{\de v(x)}=\E \int_{\R} \frac{\de F (v_t)}{\de v_t(z)}\xi_t(z;x)[v] \, dz,
\]
that
\[
\|U^{s,t} F\|_{C^{1,2}(\MC_{\la}^{sign}(\R))}
\le \E \left\| \int_{\R} \frac{\de F (v_t)}{\de v_t(z)}\xi_t(z;.)[v] \, dz \right\|_{C^2(\R)}
\]
\[
\le \E  \left\| \frac{\de F (v_t)}{\de v_t(.)}\right\|_{C^2(\R)}
 \left\| \frac{\pa ^2}{\pa x^2}\xi_t(.;x)[v] \right\|_{(C^2(\R))'} \le C(T).
\]
It follows from Theorem \ref{thsenseSPDE2} and the formula
\[
\frac{(\de^2 U^{s,t}(F))(v) }{\de v(x)\de v(y)}= \E \frac{\de^2 F (v_t)}{\de v(x)\de v (y)}
\]
\[
=\E \int _{\R}\frac{\de F (v_t)}{\de v_t(z)}\eta_t(z;x,y)[v_0] dz
+\E \int_{\R^2} \frac{\de^2 F (v_t)}{\de v_t(z) \de v_t(w)}\xi_t(z;x)[v_0]\xi_t(w;y)[v_0] \, dz dw,
\]
that
\[
\|U^{s,t} F\|_{C^{2,1\times 1}(\MC_{\la}^{sign}(\R))}
\le \E \left\| \frac{\de^2 F (v_t)}{\de v(.)\de v (.)} \right\|_{C^{1\times 1}(\R^2)}
\]
\[
\le \left\|\frac{\pa^{\al} }{\pa x_1^{\al}}\frac{\pa^{\be} }{\pa x_2^{\be}}\eta_t(.;x_1,x_2)\right\|_{[C^2(\R)]'}
\|F\|_{C^{1,2}(\MC_{\la}^{sign}(\R))}
\]
\[
+\left\| \frac{\pa ^{\al}}{\pa x^{\al}}\xi_t(.;x)[v] \right\|_{(C^1(\R))'}
\left\| \frac{\pa ^{\be}}{\pa y^{\be}}\xi_t(.;y)[v] \right\|_{(C^1(\R))'}
\left\| \frac{\de^2 F (v_t)}{\de v_t(.) \de v_t(.)}\right\|_{C^{2,1\times 1}(\MC_{\la}^{sign}(\R))},
\]
leading to \eqref{eq3thsenseSPDEMarkovsem}.
\end{proof}

\section{Proof of Theorem \ref{th2}}

Let us return to our initial equation \eqref{eqstartSDEN}.
By the standard assumption of the Lipschitz continuity of all coefficients, equation
\eqref{eqstartSDEN} is well-posed in $\R^N$ and specifies a Feller diffusion
and the corresponding backward and forward propagators $U_N$, $V_N$ given by \eqref{eqbackpropNpart}, \eqref{eqforpropNpart}.
We are interested in the limit of this diffusion as $N\to \infty$.

Applying Ito's formula we obtain the generator of the diffusion specified by \eqref{eqstartSDEN}:
\begin{equation}
\label{eqstartSDENgen}
A_Nf(x_1, \cdots , x_N)=\sum_{j=1}^N B^i_{\mu} f +\sum_{i<j} \si_{com}(x_i)\si_{com}(x_j) \frac{\pa ^2 f}{\pa x_i \pa x_j},
\end{equation}
where $\mu=(\de_{x_1}+\cdots + \de_{x_N})/N$ and
\begin{equation}
\label{eqstartSDENgen1}
B_{\mu} g(x)=b(t,x, \mu, u(t,x,\mu))\frac{\pa g}{\pa x}
+\frac12(\si^2_{com}(x)+\si^2_{ind}(x)) \frac{\pa ^2 g}{\pa x^2},
\end{equation}
with $B_{\mu}^i$ denoting the action of $B_{\mu}$ on the $i$th coordinate of $f$.

Here and everywhere by a time-dependent generator, say $A_N$ above, of a non-homogeneous Markov process
we mean a time-dependent family  of operators such that for $f$ from some invariant dense subspace
of bounded continuous functions the equation
\[
\frac{d}{ds} U_N^{s,t} f =-A_N U_N^{s,t} f
\]
holds for $s\le t$. In the case of the $N$-particle diffusion, the invariant subspace can be usually taken
to be the space of twice differentiable functions
(which are invariant if $\si$ and $b$ are twice and once differentiable respectively). In the case of
the limiting measure-valued process the invariant domains will be given by the subspaces $C^{2, k\times k}(\MC^{sign})$.

The first term in \eqref{eqstartSDENgen} can be considered as describing a diffusion arising
from the system of particles with a mean-field interaction and the second term as giving an additional binary interaction
(though not of a standard potential type that can be easily included in the mean-field interaction).

By the standard inclusion \eqref{eqstandinclusionpartmeasure}, the process specified by
\eqref{eqstartSDENgen} can be equivalently considered as a measure-valued process defined on
the set of linear combinations $\PC_N(\R)$ of the Dirac atomic measures. On the level of propagators this correspondence
arises from the identification of symmetric functions $f$ on $X^N$ with the functionals $F=F_f$ on $\PC_N(\R)$
via the equation
\[
f(x_1, \cdots , x_N)= F_f[(\de_{x_1}+\cdots + \de_{x_N})/N].
\]
To recalculate the generator \eqref{eqstartSDENgen} in terms of functionals $F$ on measures we use
the following simple formulas for differentiation of functionals on measures (proofs can be found e.g. in \cite{Ko10}):
for $\mu= h(\de_{x_1}+\cdots + \de_{x_N})$ with $h=1/N$
\begin{equation}
\label{eqdifffunctiononmeas1}
\frac{\pa}{\pa x_j} F(\mu)=h \frac{\pa}{\pa x_j} \frac{\de F(\mu)}{\de \mu(x_j)},
\end{equation}
\begin{equation}
\label{eqdifffunctiononmeas2}
\frac{\pa ^2}{\pa x_j^2} F(\mu)=h \frac{\pa^2}{\pa x_j^2} \frac{\de F(\mu)}{\de \mu(x_j)}
+ h^2 \frac{\pa ^2}{\pa y \pa z} \left.\frac{\de^2 F(\mu)}{\de \mu(y) \de \mu (z)}\right|_{y=z=x_j},
\end{equation}
\begin{equation}
\label{eqdifffunctiononmeas3}
\frac{\pa ^2}{\pa x_i \pa x_j} F(\mu)=h^2 \frac{\pa ^2}{\pa x_i \pa x_j} \frac{\de^2 F(\mu)}{\de \mu(x_i) \de \mu (x_j)},
\quad i\neq j.
\end{equation}

Applying these formulas in conjunction with the obvious identity
\begin{equation}
\label{eqcomblemma1}
h^2 \sum_{i<j: i,j\in \{1, \ldots, N\}} \phi(x_i,x_j) =
\frac{1}{2} \int \int \phi(z_1, z_2) \mu(dz_1)\mu(dz_2)
- \frac{h}{2} \int \phi(z,z) \mu(dz),
\end{equation}
leads to the following expression of $A_N$ in terms of $F(\mu)$ (for details and more general calculations see \cite{Ko10}):
\begin{equation}
\label{eqstartSDENgenonmes}
A_NF(\mu)=\La_{lim} F(\mu) +\frac{1}{N} \La_{corr}F(\mu),
\end{equation}
with
\begin{equation}
\begin{split}
\label{eqstartSDENgenonmes1}
\La_{lim} F(\mu) &=\int_{\R}\!\!\left( B_{\mu} \frac{\de F}{\de \mu(.)}\right)(y)\mu(dy)\\
&+\frac12 \int_{\R^2}\si_{com}(y)\si_{com}(z) \frac{\pa^2}{\pa y \pa z}\frac{\de ^2F}{\de \mu(y) \de \mu(z)}\mu(dy)\mu(dz),
\end{split}
\end{equation}
\begin{equation}
\label{eqstartSDENgenonmes2}
\La_{corr} F(\mu) =\frac12 \int_{\R} \si^2_{ind}(x)
\left.\frac{\pa^2}{\pa y \pa z}\frac{\de^2 F(\mu)}{\de \mu(y) \de \mu (z)}\right|_{y=z=x} \mu (dx).
\end{equation}

Thus we have an explicit expression for the limit of $A_N$ as $N\to \infty$ and for the correction term,
which are well defined for functional $F$ from the spaces $C^{1,2}(\MC^{sign})\cap C^{2,1\times 1}(\MC^{sign})$.

It is straightforward to check by Ito's formula that the operator $\La_{lim}$ generates the measure-valued
process defined by the solution of equation \eqref{eqstartSDEN}. Hence we have the convergence of the generators
of $N$-particle approximations to the generator of the process given by \eqref{eqstartSDEN} on
the space $C^{1,2}(\MC^{sign})\cap C^{2,1\times 1}(\MC^{sign})$ with the uniform rate of convergence of order $1/N$.

But according to Theorem \ref{thsenseSPDEMarkovsem}, the propagator of the process generated by
\eqref{eqstartSDEN} acts by bounded operators on this subspace.
Hence Theorem \ref{th2} follows from the standard representation of the difference of two propagators
in terms of the difference of their generators:
\begin{equation}
\label{PropergatorProperty}
U_N^{t,r}-U^{t,r}=\int_t^rU_N^{t,s}(A_N-\La_{lim})_sU^{s,r}ds.
\end{equation}

\section{Proof of Theorem \ref{th3}}
\label{secth1}

The well-posedness of the process on pairs $(x,\mu)$ solving equations
\eqref{eqstartSDENtag} and \eqref{eqlimMacVlaSPDEtag} is straightforward once the well-poesdness of the process
solving   \eqref{eqlimMacVlaSPDEtag} is proved, because equation \eqref{eqlimMacVlaSPDEtag}
does not depend on $x$, and once it is solved, equation
\eqref{eqstartSDENtag} is just a usual Ito's equation. Straightforward extension of the above calculations
for the generator of the process solving \eqref{eqlimMacVlaSPDEtag} show that the process solving
\eqref{eqstartSDENtag} - \eqref{eqlimMacVlaSPDEtag} is generated by the operator
\[
\La_{lim}F(x,\mu)+\tilde \La_{Lim}F(x,\mu),
\]
where $\La_{lim}$ is given by \eqref{eqstartSDENgenonmes1} and acts on the variable $\mu$,
\[
\tilde \La_{lim} F(x,\mu) =b(t,x, \mu, u_t^{ind}(x,\mu))\frac{\pa F}{\pa x}
+\frac12(\si_{ind}^2+\si_{com}^2)(x)\frac{\pa ^2F}{\pa x^2}
\]
\begin{equation}
\label{eqstartSDENgenonmes1tag}
+\int \si_{com}(x)\si_{com}(y) \frac{\pa^2}{\pa x \pa y}\frac{\de F}{\de \mu(y)}\mu(dy),
\end{equation}
and with the same correction term \eqref{eqstartSDENgenonmes2}.
Thus  the proof of Theorem \ref{th3} is the same as for Theorem \ref{th2}.

\section{Proof of Theorem \ref{th1}}

Let $u_1$ be any adaptive control of the first player and $V_1$ the corresponding payoff
 in the game of $N$ players, where all other players are using
$u_{com}(t,x,\mu)$ arising from a solution to \eqref{eqMFGconsistcomnoise1},\eqref{eqsforbackboundary}.
Then $V_1\ge V_2$, where $V_2$ is obtained by playing optimally, that is using control $u_2$ arising from
the solution to \eqref{eqHJBcomnoise1}. By Theorem \ref{th3},
\[
|V_2-V_{2,lim}| \le C/N,
\]
where $V_{2,lim}$ is obtained by playing $u_2$ in the limiting game specified by equations
 \eqref{eqstartSDENtag}, \eqref{eqlimMacVlaSPDEtag}.
 But $V_{2,lim} \ge V $, where $V$ is the optimal payoff for the first player in the limiting game of
 two players, where the second, measure-valued, player uses $u_{com}$. Consequently,
 \[
 V_1 \ge V_2 \ge V_{2,lim} -\frac{C}{N} \ge V-\frac{C}{N},
 \]
 completing the proof.

\bigskip

{\bf Acknowledgements.} We are grateful to the organizers of the 'Mean field games and related topics 3'
conference in Paris, June 10-12 (2015), for the invitation to present our results, which stimulate our work
on this subject. We thank Peter Caines for presenting our talk on the conference, as the authors turn out to 
be unable to attend the workshop.

\end{document}